\theoremstyle{plain}
\newtheorem{theorem}{Theorem}
\newtheorem{corollary}{Corollary}
\newtheorem{definition}{Definition}
\newtheorem{lemma}{Lemma}
\newtheorem{proposition}{Proposition}
\numberwithin{equation}{section}
\newcommand{\Real}{\mathbb R}
\newcommand{\N}{\mathbb N}
\newcommand{\Esp}{\mathbb E}
\newcommand{\Prob}{\mathbb P}
\newcommand{\eps}{\varepsilon}
\newcommand{\Inv}{\frac{1}}
\newcommand{\n}{\mathbf{n}}
\newcommand{\E}{\mathcal E}
\newcommand{\h}{h}
\newcommand{\un}{\mathbf 1}
\newcommand{\Pl}{\Prob_\lambda}
\newcommand{\El}{\Esp_\lambda}
\date{}
\title{ \bf Excursions of the integral of the Brownian motion}
\author{ \bf Emmanuel Jacob \\
\\
\emph {Laboratoire de Probabilit\'es et Mod\`eles Al\'eatoires} \\
\emph {Universit\'e Pierre et Marie Curie, Paris VI}}
\begin{document}

\maketitle{}

\begin{abstract}
The integrated Brownian motion is sometimes known as the Langevin
process. Lachal studied several excursion laws induced by the
latter. Here we follow a different point of view developed by Pitman
for general stationary processes. We first construct a stationary
Langevin process and then determine explicitly its stationary
excursion measure. This is then used to provide new descriptions of
It\^o's excursion measure of the Langevin process reflected at a
completely inelastic boundary, which has been introduced recently by
Bertoin.
\end{abstract}

\selectlanguage{francais}
\begin{abstract}
L'intégrale du mouvement Brownien est parfois appelée processus de
Langevin. Lachal a étudié plusieurs lois d'excursions qui lui sont
associées. Nous suivons ici un point de vue différent, développé par
Pitman, pour les processus stationnaires. Nous construisons d'abord
un processus de Langevin stationnaire avant d'en déterminer
explicitement la mesure d'excursion stationnaire. Ce travail permet
alors de fournir une nouvelle description de la mesure d'excursion
d'It\^o du processus de Langevin réfléchi sur une barrière
inélastique, introduit récemment par Bertoin.
\end{abstract}

\selectlanguage{english}

 \vspace{9pt} \noindent {\bf Key words.}
{Langevin process}, {stationary process}, {excursion measure},
{time-reversal}, \\{$h$-transform.}
\par \vspace{9pt} \noindent {\bf e-mail.} {emmanuel.jacob@normalesup.org}
\par

\section{Introduction}

The Langevin process in a non-viscous fluid is simply defined as the integrated Brownian motion, that is:
$$ Y_t= Y_0 + \int_0^t W_s ds,$$
where $W$ is a Brownian motion started an arbitrary $v \in \Real$
(so $v$ is the initial velocity of $Y$). The Langevin process is not
Markovian, but the pair $Z=(Y,W)$, which is sometimes known as the
Kolmogorov process, enjoys the Markovian property. We refer to
Lachal \cite{Lachal03} for a rich source of information on this
subject.

Lachal \cite{Lachal03} has studied in depth both the ``vertical" and
``horizontal" excursions of the Brownian integral. The purpose of
this work is to follow a different (though clearly related) point of
view, which has been developed in a very general setting by Pitman
\cite{Pitman}. Specifically, we start from the basic observation
that the Lebesgue measure on $\Real^2$ is invariant for the
Kolmogorov process, so one can work with a stationary version of the
latter. The set of times at which the stationary Kolmogorov process
visits $\{0\}\times \Real$ forms a random homogeneous set in the
sense of Pitman, and we are interested in the excursion measure
$Q_{ex}$ that arises naturally in this setting. We shall show that
$Q_{ex}$ has a remarkably simple description and fulfills a useful
invariance property under time-reversal. We then study the law of
the excursions of the Langevin process away from $0$ conditionally
on its initial and final velocity, in the framework of Doob's
$\h$-transform. Finally, we apply our results to investigate the
Langevin process reflected at a completely inelastic boundary, an
intriguing process which has been studied recently by Bertoin
\cite{reflecting, SDE}. In particular we obtain new expressions for
the It\^o measure of its excursions away from $0$.

 \section{Preliminaries}

 In this section we introduce some general or intuitive notations and
recall some known results that we will use later on. We write $Y$
for the Langevin process, $W$ for its derivative, and $Z$ for the
Kolmogorov process $(Y,W)$, which, unlike $Y$, is Markovian.

The law of the Kolmogorov process with initial condition $(x,u)$
will be written $\Prob_{x,u}^+$, and the expectation under this
measure $\Esp_{x,u}^+$. Here, the exponent $+$ refers to the fact
that the time parameter $t$ is nonnegative. We denote by
$p_t(x,u;dy,dv)$ the probability transitions of $Z$, and by
$p_t(x,u;y,v)$ their density. For $x,u,y,v \in \Real$, we have:

$$p_t(x,u;y,v) dudv:= p_t(x,u;dy,dv) := \Prob_{x,u}^+ (Z_t \in dydv).$$

These densities are known explicitly and given by:
\begin{equation}
p_t(x,u;y,v) = \frac{\sqrt 3}{\pi t^2} \exp \Big[ -\frac{6}{t^3}
(y-x-tu)^2 + \frac{6}{t^2} (y-x-tu)(v-u)-\frac{2}{t} (v-u)^2 \Big].
\end{equation}

One can check from the formula that the following identities are satisfied:
\begin{eqnarray}
p_t(x,u;y,v) &=& p_t(0,0;y-x-ut,v-u), \\
p_t(x,u;y,v) &=& p_t(-x,-u;-y,-v), \\
p_t(x,u;y,v) &=& p_t(x,v;y,u).
\end{eqnarray}
A combination of these formulas gives

\begin{eqnarray}
\label{duality} p_t(x,u;y,v) &=& p_t(y,-v;x,-u),
\end{eqnarray}
that we will use later on. See for example the equations (1.1), p
122, and (2.3), p 128, in \cite{Lachal03}, for references.

The semigroup of the Kolmogorov process will be written $P_t$. If
$f$ is a nonnegative measurable function, we have:

 $$P_t f(x,u):= \Esp_{x,u}^+\big(f(Y_t,W_t)\big)= \int_{\Real^2} dydv p_t(x,u;y,v) f(y,v).$$

The law of the Kolmogorov process with initial distribution given by
the Lebesgue measure $\lambda$ on $\Real^2$ will be written
$\Prob_\lambda^+$. It is given by the expression:
$$ \Pl^+ = \int_{\Real^2} \lambda(dx,du) \Prob_{x,u}^+. $$

 Although $\lambda$ is only a $\sigma$-finite measure, the expression
above still defines what we call a stochastic process in a
generalized sense (this is a common generalization, though). We
still use all the usual vocabulary, such as the law of the process,
the law of the process at the instant $t$, even though this laws are
now $\sigma$-finite measures and not probabilities.

 Finally, we recall the scaling property of the Langevin process:

 \begin{eqnarray}\label{scaling}
\Esp_{x,u}^+ \Big(F \big((Y_t)_{t\ge 0}\big)\Big) = \Esp_{k^{3}x,
ku}^+ \Big(F\big( (k^{-3} Y_{k^2 t})_{t \ge 0}\big)\Big),
\end{eqnarray}
where $F$ is any nonnegative measurable functional.

\section{Stationary Kolmogorov process}

The stationary Kolmogorov process is certainly not something new for
the specialists, as it is known that $\lambda$ is an invariant
measure for the Kolmogorov process. This section still gives, for
the interested reader, a rigorous introduction to the stationary
Kolmogorov process, including a duality property that allows us to
consider the effect of time-reversal, which will be a central point
of this paper.

\subsection{Stationarity and duality lemmas}

We write $\lambda$ for the Lebesgue measure on $\Real^2$.

\begin{lemma}
For any nonnegative measurable functions $f,g$ on $\Real^2$ and
every $t\geq0$, we have:
$$ \El^+ \big(f(Y_t,W_t)\big) = \El^+ \big(f(Y_0,W_0)\big), $$
and:
$$ \El^+  \big(f(Y_0,W_0) g(Y_t, W_t)\big) = \El^+ \big(f(Y_t,-W_t) g(Y_0,-W_0)\big). $$
\end{lemma}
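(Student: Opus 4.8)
The plan is to unfold the definition $\El^+=\int_{\Real^2}\lambda(dx,du)\,\Esp_{x,u}^+$ and reduce both assertions to identities between integrals of the transition density $p_t$. Since under $\Prob_{x,u}^+$ the pair $(Y_0,W_0)$ is equal to the deterministic starting point $(x,u)$, the first claim becomes $\int_{\Real^2} dx\,du\,P_t f(x,u)=\int_{\Real^2} dy\,dv\,f(y,v)$, while the second becomes the equality of two integrals over $\Real^4$, namely $\int f(x,u)\,g(y,v)\,p_t(x,u;y,v)$ against $\int f(y,-v)\,g(x,-u)\,p_t(x,u;y,v)$, where in each case the integration is with respect to $dx\,du\,dy\,dv$. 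All integrands are nonnegative, so Tonelli's theorem legitimises every interchange of the order of integration, and the fact that $\lambda$ is only $\sigma$-finite causes no difficulty.

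For the first identity I would insert $P_t f(x,u)=\int_{\Real^2} dy\,dv\,p_t(x,u;y,v)f(y,v)$ and exchange the order of integration, reducing the claim to $\int_{\Real^2} dx\,du\,p_t(x,u;y,v)=1$ for every fixed $(y,v)$. The key input is the translation-covariance identity $p_t(x,u;y,v)=p_t(0,0;y-x-ut,v-u)$: the change of variables $(x,u)\mapsto(a,b):=(y-x-ut,\,v-u)$ (with $y,v,t$ fixed) has Jacobian of absolute value one, so the integral collapses to $\int_{\Real^2} da\,db\,p_t(0,0;a,b)=1$, since $p_t(0,0;\cdot,\cdot)$ is a probability density.

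For the duality identity I would again replace $P_t$ by its kernel and write both sides as integrals over $\Real^4$ as above. In the right-hand integral I perform the substitution $(x,u,y,v)\mapsto(y',-v',x',-u')$, that is $x=y'$, $u=-v'$, $y=x'$, $v=-u'$, whose Jacobian once more has absolute value one. This turns $f(y,-v)$ into $f(x',u')$, $g(x,-u)$ into $g(y',v')$, and $p_t(x,u;y,v)$ into $p_t(y',-v';x',-u')$. The duality identity \eqref{duality} then gives $p_t(y',-v';x',-u')=p_t(x',u';y',v')$, so the transformed right-hand integral becomes exactly the left-hand one, which finishes the proof. The only point requiring genuine care is to select the substitution so that it dovetails precisely with \eqref{duality}; once the sign flips on the velocities are matched to the roles of the arguments in the duality relation, the remainder is bookkeeping.
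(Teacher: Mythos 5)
Your proposal is correct and follows essentially the same route as the paper: unfold $\El^+ = \int \lambda(dx,du)\,\Esp_{x,u}^+$, reduce both claims via Tonelli to identities for the kernel $p_t$, and conclude from its symmetries -- in particular your four-variable substitution combined with (\ref{duality}) for the second identity is the paper's computation in one step. The only cosmetic difference is in the first identity, where you derive $\int dx\,du\,p_t(x,u;y,v)=1$ from the translation identity $p_t(x,u;y,v)=p_t(0,0;y-x-ut,v-u)$ and a unimodular change of variables, whereas the paper obtains the same normalization by applying (\ref{duality}) and then flipping the sign of $u$.
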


This lemma states the (weak) stationarity of the measure $\lambda$ and a duality property of the process under this measure.

\begin{proof}

Let $f$ be a nonnegative measurable function on $\Real^2$, and $t$
be a positive real number.

\begin{eqnarray*}
 \El^+ \big(f(Y_t,W_t)\big) &:=& \int dx du \Esp_{x,u}^+ \big(f(Y_t,W_t)\big) \\
  &=& \int dx du \int dy dv  \: p_t(x,u;y,v) f(y,v) \\
  & =& \int \int dx du dy dv \: p_t(y,-v;x,-u) f(y,v)\ \ \  \text{   by (\ref{duality})} \\
   &=& \int dy dv f(y,v) \int dx du \: p_t(y,-v;x,u) \\
   &=& \int dy dv f(y,v)\\
   &=& \El^+ \big(f(Y_0,W_0)\big),
\end{eqnarray*}
where in the fourth line we made the simple change of variables $u
\rightarrow -u$.

For the second part, let $f$ and $g$ be two nonnegative measurable
functions, and $t$ a positive real number.

\begin{eqnarray*}
  & & \El^+ \big(f(Y_0,W_0)g(Y_t,W_t)\big) \\ %&:=& \int dx du f(x,u) \Esp_{x,u} (g(Y_t,W_t))
  &=& \int dx du f(x,u) \int dy dv  \: p_t(x,u;y,v) g(y,v) \\
  &=& \int \int dx du dy dv f(x,-u) g(y,-v) \: p_t(x,-u;y,-v) \\
   &=& \int dy dv g(y,-v) \int dx du f(x,-u) p_t(y,v;x,u) \ \ \ \ \ \  \text{   by (\ref{duality}) again} \\
   &=& \El^+ \big(g(Y_0,-W_0) f(Y_t,-W_t) \big).
\end{eqnarray*}
The lemma is proved.
\end{proof}
We immediately deduce the following corollary.
%\footnote{For the sake of simplicity of notations we write $(Y_s, W_s ) _{s \geq 0}$ instead of $((Y_s, W_s))_{s \geq 0}$, here and in the rest of the paper.}

\begin{corollary}
 For any $t>0$, we have:

 1) Stationarity: The law of the process $(Y_{t+s}, W_{t+s} ) _{s \geq 0}$ under $\Pl^+$ is $\Pl^+$.

 2) Duality: the laws of the processes $(Y_{t-s}, -W_{t-s} ) _{0 \leq s \leq t}$ and $(Y_s, W_s ) _{0 \leq s \leq t}$ under $\Pl^+$ are the same.
\end{corollary}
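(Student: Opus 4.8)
The plan is to leverage the fact that the Kolmogorov process $Z=(Y,W)$ is Markov with a time-homogeneous transition semigroup $P_t$, so that under $\Pl^+$ its finite-dimensional distributions are obtained by integrating products of the transition kernels $p_t$ against the initial law. Since the law of a (generalized) Markov process is determined by its finite-dimensional marginals, it suffices to verify both assertions at that level, where the Lemma---which furnishes only one- and two-time statements---can be combined with the Markov property and the explicit identity (\ref{duality}).

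For the stationarity claim, I would fix times $0\le s_1<\cdots<s_n$ and use the Markov property to write the joint law of $(Z_{t+s_1},\ldots,Z_{t+s_n})$ under $\Pl^+$ as the product $\prod_{i=1}^{n-1}p_{s_{i+1}-s_i}$ integrated against the law of $Z_{t+s_1}$. The first assertion of the Lemma says precisely that the one-time marginal of $Z_r$ under $\Pl^+$ is $\lambda$ for every $r\ge 0$, so the initial law here is $\lambda$, exactly as for the family $(Z_{s_1},\ldots,Z_{s_n})$. The two finite-dimensional distributions therefore coincide, and letting $n$ and the $s_i$ vary gives assertion 1.

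For the duality claim, I would fix $0\le s_1<\cdots<s_n\le t$ and write the finite-dimensional law of the reversed, velocity-flipped process $(Y_{t-s},-W_{t-s})$ at these times. After reordering the underlying times $t-s_1>\cdots>t-s_n$ into increasing order and invoking the Markov property together with the invariance of $\lambda$ (its density is constant, and the flip $(y,v)\mapsto(y,-v)$ preserves Lebesgue measure), the joint density at points $w_i=(y_i,v_i)$ takes the form $\prod_{i=1}^{n-1}p_{s_{i+1}-s_i}\big((y_{i+1},-v_{i+1});(y_i,-v_i)\big)$. Applying (\ref{duality}) to each transition factor turns $p_{s_{i+1}-s_i}\big((y_{i+1},-v_{i+1});(y_i,-v_i)\big)$ into $p_{s_{i+1}-s_i}\big((y_i,v_i);(y_{i+1},v_{i+1})\big)$, which is exactly the factor appearing in the finite-dimensional density of the forward process $(Z_s)_{0\le s\le t}$. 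The densities match termwise, proving assertion 2.

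The main obstacle is not any single computation but the passage from the two-time statements of the Lemma to the full path-level identities: this is where the Markov property is essential, since it reduces everything to products of $p_t$ to which (\ref{duality}) can be applied increment by increment. A secondary point requiring care is the bookkeeping of the time-reversal in the duality, where one must reverse the order of the times, carry the velocity flip through each transition factor, and check that the constant density of $\lambda$ together with the Lebesgue-invariance of the flip makes the endpoint contributions disappear, so that the product matches cleanly.
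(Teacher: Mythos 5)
Your proposal is correct, and it shares the paper's overall skeleton (continuity reduces everything to finite-dimensional marginals, which are then handled via the Markov property), but the execution of the duality part is genuinely different. The paper treats Lemma 1 as its only input: it proves the identity
\begin{equation*}
\El^+\big[f_0(Z_0)f_1(Z_{t_1})\cdots f_n(Z_{t_n})\big]
= \El^+\big[f_n(Z_0)f_{n-1}(Z_{t_n-t_{n-1}})\cdots f_0(Z_{t_n})\big]
\end{equation*}
by induction on $n$, absorbing one time point per step through the Markov property and invoking the weak two-time duality of Lemma 1 once per step, never touching the transition densities again. You instead bypass Lemma 1's duality statement and go straight to the kernel identity (\ref{duality}): you write the $n$-time density of the reversed, velocity-flipped process as the product $\prod_{i=1}^{n-1}p_{s_{i+1}-s_i}\big((y_{i+1},-v_{i+1});(y_i,-v_i)\big)$ and flip each factor termwise, checking (correctly, and this is the point that needed care) that the endpoint contributions vanish because the relevant one-time marginal is Lebesgue, $\int dx\,du\,p_r(x,u;y,v)=1$, and the flip $(y,v)\mapsto(y,-v)$ preserves $\lambda$. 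Your route is more elementary and arguably more transparent here, since the explicit formula (\ref{duality}) is available and the mechanism of reversal is visible factor by factor; in effect you have unrolled the paper's induction into a single product computation. What the paper's version buys is generality and modularity: its induction shows that weak (two-time) duality plus the Markov property implies strong pathwise duality for any Markov process with invariant $\sigma$-finite measure, without assuming absolutely continuous transition kernels, and it keeps the density manipulations quarantined inside the proof of Lemma 1. Both arguments are valid; for this specific process they are two presentations of the same underlying cancellation.
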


This corollary  provides a probabilistic interpretation of the stationarity and the duality property, here stated in a strong sense. Strong sense means that we consider here the whole trajectory and not merely the two-dimensional time-marginals. We thus see that the stationarity is a property of invariance of the process by time-translation, and that the duality is a property of symmetry of the process by time-reversal.

\begin{proof}
 As the processes we consider are continuous, their laws are determined by their finite-dimensional marginals.
  The strong stationarity is a simple consequence from the weak stationarity and the
Markov property, while the strong duality needs a bit more work. Let
$n \in \N$, let $0=t_0 \leq t_1 \leq ... \leq t_n$ be real numbers
and let $f_0$, $f_1$, ..., $f_n$ be  $n+1$ nonnegative measurable
functions. We have to prove that the following equality is satisfied
(recall $Z =(Y,W)$):
\begin{eqnarray} \label{strongstationarity}
\El ^+ \big[ f_0(Z_0) f_1(Z_{t_1}) ... f_n(Z_{t_n})\big]
  &=& \El^+ \big[ f_n(Z_0) f_{n-1}(Z_{t_n-t_{n-1}}) ...
f_1(Z_{t_n-t_1}) f_0(Z_{t_n}) )\big]. \qquad
\end{eqnarray}

 This is checked by induction on $n$. For $n=1$, this is nothing else
than the weak duality. We suppose now that the identity
(\ref{strongstationarity}) is true for any integer strictly smaller
than $n$. We have:

\begin{eqnarray*}
 & & \El ^+\big[ f_0(Z_0) f_1(Z_{t_1}) ... f_n(Z_{t_n})\big] \\
 &=& \El^+\big[ f_0(Z_0) ... f_{n-1} (Z_{t_{n-1}}) \Esp _{Z_{t_{n-1}}}^+ [ f_n (Z_{t_n-t_{n-1}}) ] \big] \\
 &=& \El ^+ \big[  \Esp _{Z_0} ^+ [ f_n (Z_{t_n-t_{n-1}})] f_{n-1} (Z_0) f_{n-2} (Z_{t_{n-1}-t_{n-2}}) ... f_0 (Z_{t_{n-1}}) \big] \\
 &=& \El^+ \Big[ f_n (Z_{t_n-t_{n-1}}) \Esp_{Z_0} ^+ \big[ f_{n-1} (Z_0) f_{n-2} (Z_{t_{n-1}-t_{n-2}}) ... f_0 (Z_{t_{n-1}}) \big] \Big] \\
 &=& \El ^+ \Big[ f_n (Z_0) \Esp_{Z_{t_n-t_{n-1}}} ^+ \big[ f_{n-1} (Z_0) f_{n-2} (Z_{t_{n-1}-t_{n-2}}) ... f_0 (Z_{t_{n-1}}) \big] \Big] \\
 &=& \El ^+ \big[ f_n (Z_0) f_{n-1} (Z_{t_{n-1}}) ... f_0 (Z_{t_n})\big].
\end{eqnarray*}
To get the second equality, we used (\ref{strongstationarity}) with the functions $f_0$, ..., $f_{n-2}$ and $\tilde{f}_{n-1}: (x,u) \rightarrow
f_{n-1} (x,u) \Esp _{x,u}^+ \big[ f_n (Z_{t_n-t_{n-1}}) \big]$.
To get the fourth equality, we use the weak duality with times 0 and $t_n-t_{n-1}$.

This completes our proof.
\end{proof}

\subsection{Construction of the stationary Kolmogorov process}

We are ready to construct the stationary Kolmogorov process with time parameter $t \in \Real$. First, we construct a process indexed by $\Real$ with a position $(x,u)$ at time 0. The process $(Z_t)_{t\in \Real}=(Y_t,W_t)_{t\in \Real}$ is such that $(Y_t,W_t)_{t\in \Real_+}$ has the law $\Prob_{x,u}^+$ and $ (Y_{-t},-W_{-t})_{t\in \Real_+}$ is an independent process and of law $\Prob_{x,-u}^+ $.
 The law of the process $(Z_t)_{t\in \Real}$ will be denoted by $\Prob_{x,u}$.

 \begin{definition}
 The stationary Kolmogorov process is the generalized process of law $\Pl$ given by:
 \begin{eqnarray}
 \Pl= \int dxdu \Prob_{x,u}.
 \end{eqnarray}
 \end{definition}

Lemma 1 and Corollary 1 still hold if we drop the superscript +.
 We stress that the stationary Kolmogorov process has a natural
filtration given by $\mathcal{F}_t = \sigma (\{Z_s\}_{-\infty < s
\leq t})= \sigma (\{Y_s\}_{-\infty < s \leq t})$. If $(Z_t)_{t\in
\Real}=(Y_t,W_t)_{t\in \Real}$, we call conjugate of $Z$ and write
$\overline Z$ for the process $\big(\overline Z_t\big)_{t\in
\Real}=(Y_t,-W_t)_{t\in \Real}$.
\begin{lemma}
 The stationary Kolmogorov process has the following properties:

\begin{enumerate}
  \item Under $\Pl$, The processes $Z$ and $\big(\overline Z_{-t}\big)_{ t\in \Real}$ have the same law. That is, the law $\Pl$ is invariant by time-reversal and conjugation.
  \item Under $\Pl$, the processes $(Y_t,W_t)_{t\in \Real}$ and $(Y_{t_0+t},W_{t_0+t})_{t\in \Real}$ have the same law for any $t_0 \in \Real$.
That is, the law $\Pl$ is invariant by time-translation.
  \item The process Z is a stationary Markov process under $\Pl$.
\end{enumerate}
\end{lemma}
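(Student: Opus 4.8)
The plan is to reduce all three assertions to one computation, namely the finite-dimensional distributions of $Z$ under $\Pl$. Concretely, I will show that for real times $r_1\le\cdots\le r_n$ the law of $(Z_{r_1},\dots,Z_{r_n})$ under $\Pl$ is the ($\sigma$-finite) measure
\begin{equation*}
\lambda(dz_1)\,p_{r_2-r_1}(z_1;dz_2)\cdots p_{r_n-r_{n-1}}(z_{n-1};dz_n).
\end{equation*}
Once this is known, assertions 2 and 3 follow at once: the expression depends on the times only through the consecutive differences $r_{i+1}-r_i$, so it is unchanged when every $r_i$ is replaced by $r_i+t_0$, which is time-translation invariance; and it is exactly the joint law of a Markov chain with one-step transition $p$ and initial law $\lambda$, so $Z$ is Markov with semigroup $P_t$ and invariant law $\lambda$. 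Assertion 1 I will treat first, directly from the construction.

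For assertion 1, write $\tilde Z_t:=\overline Z_{-t}=(Y_{-t},-W_{-t})$. Under $\Prob_{x,u}$ the two blocks of $Z$ are independent, so it suffices to identify the two blocks of $\tilde Z$. For $t\ge0$ one has $\tilde Z_t=(Y_{-t},-W_{-t})$, which is by definition the block of law $\Prob^+_{x,-u}$; and the conjugate-reversed negative block $(\tilde Y_{-s},-\tilde W_{-s})_{s\ge0}=(Y_s,W_s)_{s\ge0}$ has law $\Prob^+_{x,u}$. Comparing with the definition of $\Prob_{x,-u}$, we conclude that $\tilde Z$ has law $\Prob_{x,-u}$ under $\Prob_{x,u}$. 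Integrating over $(x,u)$ and using that $u\mapsto-u$ leaves $\lambda$ invariant gives that $\tilde Z$ has law $\int dx\,du\,\Prob_{x,-u}=\Pl$, which is assertion 1.

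It remains to compute the finite-dimensional laws. Fix $r_1\le\cdots\le r_k<0\le r_{k+1}\le\cdots\le r_n$ and nonnegative functions $f_1,\dots,f_n$. Under $\Prob_{x,u}$ the positive block $(Z_t)_{t\ge0}$ has law $\Prob^+_{x,u}$ and, writing $\hat Z_a:=\overline Z_{-a}$, the block $(\hat Z_a)_{a\ge0}$ has law $\Prob^+_{x,-u}$ and is independent of the positive block given $Z_0=(x,u)$. Hence the joint density splits as a product of a density for the positive times, built from forward transitions $p$ out of $(x,u)$, and a density for the negative times, built from forward transitions of $\hat Z$ out of $(x,-u)$ evaluated at the conjugated values $\overline{z_j}$. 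The heart of the matter is to glue these two pieces after integrating over the seam value $(x,u)=Z_0$ against $\lambda$. The mechanism is already visible for one negative and one positive time, $r_1=-a$, $r_2=b$, where one finds
\begin{equation*}
\int dx\,du\;p_a\big((x,-u);(y_1,-v_1)\big)\,p_b\big((x,u);z_2\big)=p_{a+b}\big((y_1,v_1);z_2\big),
\end{equation*}
since the duality identity (\ref{duality}) gives $p_a\big((x,-u);(y_1,-v_1)\big)=p_a\big((y_1,v_1);(x,u)\big)$ and Chapman--Kolmogorov then performs the integration over $(x,u)$.

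The same two ingredients drive the general case. Repeated use of (\ref{duality}) rewrites each reversed-conjugate transition of the negative block as a forward $p$-transition read in the natural time order, so that the negative times, the seam, and the positive times are strung together into a single forward chain; the integration over $(x,u)$ against $\lambda$ is absorbed by one Chapman--Kolmogorov splicing at the seam, leaving the initial factor $\lambda(dz_1)$ at the earliest time $r_1$. This yields the claimed Markov-chain form of the finite-dimensional law and completes the proof. The only delicate point is precisely this splicing at $t=0$: it is where the independence-given-$Z_0$ structure of the construction, the time-reversal identity (\ref{duality}), and the invariance of $\lambda$ must combine, and it is what makes the a priori non-stationary, seam-privileging construction produce a genuinely stationary Markov process.
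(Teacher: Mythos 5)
Your proof is correct, but it takes a genuinely different route from the paper's for parts 2 and 3. The paper proves part 1 exactly as you do, directly from the construction; for part 2, however, it argues at the level of trajectories: it splits a generic cylinder functional into past/middle/future pieces $f\big((Y_t)_{t\le -t_0}\big)\,g\big((Y_t)_{-t_0\le t\le 0}\big)\,h\big((Y_t)_{t\ge 0}\big)$, uses the conditional independence at the seam and the Markov property to reduce both the shifted and unshifted expectations to expressions of the form $\El\big[F(Z_0)\,g\,H(Z_{t_0})\big]$, and then identifies them via part 1 together with the \emph{strong} duality of Corollary 1 (itself proved by induction on the number of marginals); part 3 is then obtained from the Markov property on positive times plus stationarity. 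You instead compute the finite-dimensional distributions of $\Pl$ in closed Markov-chain form $\lambda(dz_1)\,p_{r_2-r_1}(z_1;dz_2)\cdots p_{r_n-r_{n-1}}(z_{n-1};dz_n)$, splicing the seam at $t=0$ by the density-level duality (\ref{duality}) plus Chapman--Kolmogorov, and read off parts 2 and 3 simultaneously since the formula depends only on time increments. Your repeated application of (\ref{duality}) along the reversed block plays precisely the role of the paper's induction behind Corollary 1, telescoped into a density identity; your one-seam computation is displayed and the general gluing, though only sketched, is routine, so I regard the argument as complete. What each approach buys: yours yields a stronger explicit output (the finite-dimensional laws of the stationary process, making the Markov property for negative times an immediate corollary rather than a separate remark), while the paper's stays at the level of functionals of trajectories, reuses the already-established strong duality, and avoids density manipulations across the seam. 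One point you share with the paper and should state explicitly: since these are $\sigma$-finite path measures, the identification of laws from finite-dimensional marginals rests on the continuity of the paths (and a covering of the path space by cylinder sets of finite measure), the same remark the paper makes at the start of its proof of Corollary 1.
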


\begin{proof}
 (1) Let us consider $Z$ a process of law $\Prob_{x,u}$. It is
immediate from the definition that the conjugate of the
time-reversed process, that is $\big(\overline Z_{-t}\big)_{t \in
\Real}$, is a process of law  $\Prob_{x,-u}$. The result follows.

 (2) Let us write $\Pl^{t_0}$ for the law of the process
$(Y_{t_0+t},W_{t_0+t})_{t\in \Real}$ under $\Pl$, and let us suppose
in this proof that $t_0$ is positive. We want to prove that
$\Pl^{t_0}$ and $\Pl$ are equal. It is enough to prove that for any
suitable functional $f$, $g$ and $h$, the expectations of the
variable $$f\big((Y_t)_{t \leq -t_0}\big) \ g\big((Y_t)_{-t_0 \leq t
\leq 0}\big) \ h\big((Y_t)_{0 \leq t}\big)$$ under these two
measures are equal \footnote{We take only functionals of Y and not
of W. This is in order to make the notations simpler and has no
incidence, as W can be recovered from Y by taking derivatives.}. On
the one hand, we have:

\begin{eqnarray*}
&& \El^{t_0} \big[f((Y_t)_{t \leq -t_0}) g((Y_t)_{-t_0 \leq t \leq 0}) h((Y_t)_{0 \leq t})\big] \\
&=& \El \big[f((Y_t)_{t \leq 0}) g((Y_t)_{0 \leq t \leq t_0}) h((Y_t)_{t_0 \leq t})\big] \\
&=& \El \Big[ \Esp_{Y_0,W_0} \big[f((Y_t)_{t \leq 0}) \big] \: \Esp_{Y_0,W_0} \big[ g((Y_t)_{0 \leq t \leq t_0})
\Esp_{Y_{t_0}, W_{t_0}} [h((Y_t)_{t_0 \leq t}) ] \big] \Big] \\
&=& \El \Big[\Esp_{Y_0,W_0} \big[f((Y_t)_{t \leq 0})\big] \: g\big((Y_t)_{0 \leq t \leq t_0}\big) \:
\Esp_{Y_{t_0}, W_{t_0}} \big[h((Y_t)_{t_0 \leq t}) \big]\Big] \\
&=& \El \big[F(Y_0,W_0) \: g((Y_t)_{0 \leq t \leq t_0}) \:
H(Y_{t_0}, W_{t_0})\big],
\end{eqnarray*}
where we wrote $F(x,u)=\Esp_{x,u}\big[f((Y_t)_{t \leq 0})\big]$ and
$H(x,u)= \Esp_{x,u} \big[h((Y_t)_{t_0 \leq t}) \big]$. To get the
third line we use the independence of $(Y_t)_{t \leq 0}$ and
$(Y_t)_{t \geq 0}$ conditionally on $(Y_0, W_0)$ and the Markov
property of $(Y_t)_{t \geq 0}$ at time $t_0$.

On the other hand, we have:

\begin{eqnarray*}
&& \El \big[f((Y_t)_{t \leq -t_0}) g((Y_t)_{-t_0 \leq t \leq 0}) h((Y_t)_{0 \leq t})\big] \\
&=& \El \big[\Esp_{Y_{-t_0},W_{-t_0}}[f((Y_t)_{t \leq 0})] \: g((Y_t)_{-t_0 \leq t \leq 0})\: \Esp_{Y_0, W_0} [h((Y_t)_{0 \leq t}) ]\big] \\
 &=&\El \big[F(Y_{-t_0}, W_{-t_0}) \: g((Y_t)_{-t_0 \leq t \leq 0}) \: H(Y_0,W_0)\big]\\
  &=&\El \big[H(Y_0,W_0) \: g((Y_{t_0-t})_{0 \leq t \leq t_0}) \: F(Y_{t_0}, W_{t_0})\big],
\end{eqnarray*}
where $F$ and $H$ are defined above and we used the time-reversal invariance property for $\Pl$ to get the last line.

Now, the fact that the two expressions we get are equal is a direct consequence of the duality property stated in a strong sense.

 (3) In this third statement the important word is the word
\emph{Markov}, not the word \emph{stationary}. Indeed the Markov
property for negative times is not immediate in the definition of
$\Pl$. But the Markov property for positive times is, and this
combined with the stationarity immediately gives the Markov property
for any time.
\end{proof}

 In the following, we will speak about the stationary Kolmogorov
process for the process $(Y,W)$ under $\Pl$, and about the
stationary Langevin process for the process $Y$ under $\Pl$.

 Before speaking about excursions of these processes, let us notice
that we could have constructed the stationary Kolmogorov process
starting from time $-\infty$ with using just the stationarity (and
not the duality). The way to do it is to consider the family of
measures $(^t\Pl^+)_{t \leq 0}$, where $^t\Pl^+$ is the measure of
the Kolmogorov process starting from the measure $\lambda$ at time
$t$ . The stationarity gives us that these measures are compatible.
We thus can use Kolmogorov extension theorem and construct the
measure starting from time $-\infty$.

 In this construction, though, the nontrivial fact is that the
process is invariant by time-reversal, and we need the duality
property to prove it.

\section{Excursions of the stationary Langevin process}

Until now we considered the Langevin -- or the Kolmogorov -- process
on an infinite time interval. In this section we will deal with the
same process killed at certain hitting times. For the sake of
convenience, we use here the notation $Y$ for the canonical smooth
process and $W$ for its derivative.

\subsection{Stationary excursion measure}

We will now study the \emph{stationary excursion measure} for a stationary process given by Pitman in~\cite{Pitman}.

 If $t$ is a time such that $Y_t=0$ and $W_t \neq 0$, we will write
$e^t$ or $(e^t_s)_{0 \leq s \leq \zeta}$ for the excursion of $Y$
away from 0 started at time $t$, and $\zeta$ for its lifetime, that
is, $\zeta(e^t):= \inf \{s>0: Y_{t+s}=0\}$ and $e^t_s:=Y_{t+s}$ for
${0 \leq s \leq \zeta}.$

 It belongs to the \emph{set of vertical excursions} $\E_0$, that is,
the set of continuous functions $t \rightarrow Y_t$, defined on
$\Real_+$, that have a c\`adl\`ag right-derivative $W$, such that
$Y$ starts from zero ($Y_0=0$), $Y$ leaves immediately zero ($Y$ has
a strictly positive lifetime $\zeta(Y)$), and dies after its first
return to $0$. This definition is inspired by the terminology of
Lachal \cite{Lachal03}, except that he considers the set of vertical
excursions for the two-dimensional process.

 We write $\Prob_{x,u}^\partial$ for the law of the Langevin process
starting with position $x$ and velocity $u \ne 0$, and killed at its
first return-time to $0$. So it is a law on the set of vertical
excursions, and under $\Prob_{0,u}^+$, the excursion starting at
time 0 is written $e^0$ and has law $\Prob_{0,u}^\partial$.

 Considering the stationary Langevin process and the homogeneous set
$\{t, Y_t = 0\}$, we define in the sense of Pitman \cite{Pitman} the
stationary excursion measure:

\begin{definition}
We call \emph{stationary excursion measure} of the stationary Langevin process, and we write $Q_{ex}$, the measure given by:
\begin{eqnarray}
\label{Pit excursion} Q_{ex} ( \bullet) = \Esp_\lambda\big[ \verb!#! \{0<t<1, Y_t=0, e^t \in \bullet \}\big].
\end{eqnarray}
\end{definition}

We stress that this measure does not give a finite mass to the set
of excursions with lifetime greater than 1, contrarily to the It\^o
excursion measure of a Markov process. By a slight abuse of
notation, when $A$ is an event, we will write $Q_{ex} (\un_A)$ for
$Q_{ex} (A)$.

We stress that for convenience we focus here and thereafter on the
Langevin process; clearly this induces no loss of generality as the
Kolmogorov process can be recovered from the Langevin process by
taking derivatives. For instance, the law of the two-dimensional process
 $(Y,W)$ under $Q_{ex}$ is equal to the stationary excursion measure for the stationary Kolmogorov process and the homogeneous set $\{t, (Y_t,W_t) \in \{0\}\times \Real\}$.

Our main result is the following:

\begin{theorem}
1) There is the identity:
\begin{equation} \label{Q_ex}
Q_{ex} (Y \in de)= \int_{u=-\infty}^{+\infty}{|u|
\Prob_{0,u}^{\partial}\big(Y \in de\big) du }.
\end{equation}

2) The measure $Q_{ex}$ is invariant by time-reversal (at the
lifetime): Namely, the measure of $Y$ under $Q_{ex}$ is the same as
that of $\widehat{Y}$ under $Q_{ex}$, where $\widehat{Y}$ is defined
by
$$ \widehat{Y}_s = Y_{\zeta - s} \hbox{ for } 0 \leq s \leq \zeta. $$
\end{theorem}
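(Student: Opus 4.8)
The plan is to read off the intensity of excursion-starts from a Kac--Rice (co-area) localization, using the fact that under $\Pl$ the one-dimensional marginal of $(Y_0,W_0)$ is exactly Lebesgue measure $\lambda=dx\,du$. First I would treat the velocity marginal alone. Near a simple zero $t_0$ with $W_{t_0}=u_0\ne 0$ one has $Y_t\approx u_0(t-t_0)$, so $\{t:|Y_t|\le\delta\}$ is an interval of length $\approx 2\delta/|u_0|$ on which $|W_t|g(W_t)\approx |u_0|g(u_0)$ for a bounded test function $g$; summing over the finitely many zeros in $(0,1)$ yields
$$\sum_{0<t<1,\;Y_t=0} g(W_t)=\lim_{\delta\to0}\frac1{2\delta}\int_0^1 \un_{\{|Y_t|\le\delta\}}\,|W_t|\,g(W_t)\,dt.$$
Taking $\Esp_\lambda$, using stationarity to remove the $t$-dependence of the integrand's expectation, and then that $(Y_0,W_0)$ has law $dx\,du$, the right-hand side becomes $\lim_{\delta\to0}\frac1{2\delta}\int_{|x|\le\delta}dx\int_\Real|u|g(u)\,du=\int_\Real|u|g(u)\,du$. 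Hence the point process of crossing velocities in unit time has intensity $|u|\,du$.

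\textbf{Part 1 (continued).} To promote this to the full excursion I would attach to each crossing time $t$ the excursion $e^t=(Y_{t+s})_{0\le s\le\zeta}$. By the strong Markov property of the stationary Kolmogorov process, conditionally on $\{Y_t=0,\,W_t=u\}$ the excursion $e^t$ is independent of $\mathcal F_t$ and has law $\Prob_{0,u}^\partial$. Replacing $g(W_t)$ by $g(W_t)\,\Phi(e^t)$ for a test functional $\Phi$ and conditioning at the crossing turns the intensity $|u|\,du$ into $|u|\,\Prob_{0,u}^\partial(\,\cdot\,)\,du$, which is precisely \eqref{Q_ex}. The delicate point, and the main obstacle for Part 1, is to justify the interchange of the limit $\delta\to0$ with $\Esp_\lambda$ while simultaneously carrying the future-dependent mark $\Phi(e^t)$: one must control the approximate excursions attached to the times $t$ with $0<|Y_t|\le\delta$ and check the requisite integrability in the $\sigma$-finite setting, which is where the genuine work lies.

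\textbf{Part 2.} For the time-reversal invariance I would argue directly from the symmetry of the stationary process rather than from the explicit formula. By the reversal--conjugation invariance of $\Pl$ proved in Section~3, the process $(Y_t)_{t\in\Real}$ has the same law as $(Y_{-t})_{t\in\Real}$ (conjugation only flips the velocity coordinate and leaves $Y$ untouched). The map $t\mapsto -t$ sends each excursion interval $(l,r)$ of $Y$ to the interval $(-r,-l)$ of $(Y_{-\cdot})$, on which the excursion becomes exactly the reversed excursion $\widehat e$, and it exchanges the left and right endpoints. The argument then runs in three moves: (i) by stationarity, counting $\widehat e$-marked excursions by their left endpoint in $(0,1)$ has the same $\Esp_\lambda$ as counting them by their right endpoint in $(0,1)$; (ii) the reversal invariance of $\Pl$ replaces these right-endpoint counts of reversed excursions by left-endpoint counts of the original excursions in the window $(-1,0)$; (iii) a translation by stationarity moves $(-1,0)$ back to $(0,1)$, recovering $Q_{ex}$. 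This shows that the push-forward of $Q_{ex}$ under $e\mapsto\widehat e$ equals $Q_{ex}$. The only real care needed is the bookkeeping matching each excursion's two endpoints across the reversal, together with the verification that the ``number of excursions in progress at time $t$'' controlling step~(i) is stationary and finite, so that the left/right windowing may be interchanged.
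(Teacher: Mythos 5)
Your Part~2 is correct, and it is essentially the paper's own (one-line) argument made explicit: the paper deduces the reversal invariance of $Q_{ex}$ from the definition \eqref{Pit excursion} together with the time-translation and reversal--conjugation invariance of $\Pl$, and your steps (i)--(iii) --- exchanging left- and right-endpoint counting by stationarity, applying the reversal, and translating the window back --- are exactly the bookkeeping the paper leaves implicit (all licit here by Tonelli, the integrands being nonnegative, and conjugation is harmless since $Y$ alone determines the excursion). Part~1, however, has a genuine gap at precisely the point you flag and then set aside. Your Kac--Rice scheme gives, by Tonelli and stationarity, that the pre-limit expectation equals $\int|u|g(u)\,du$ for every $\delta>0$; but passing the limit $\delta\to 0$ inside $\Esp_\lambda$ \emph{is} the theorem. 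Fatou yields only the one-sided bound $Q_{ex}\big(g(W_0)\big)\le\int|u|g(u)\,du$, and no dominated-convergence or naive uniform-integrability argument is available: $\Pl$ is merely $\sigma$-finite and the relevant counts have infinite total intensity (the paper stresses that $Q_{ex}$ gives infinite mass even to excursions of lifetime greater than $1$). To close the gap you would need to control, uniformly in $\delta$, the contribution of band passages near local extrema of $Y$ close to level $0$, or replace the band smoothing by a monotone approximation (e.g.\ sign-change counts over refining grids, to which monotone convergence applies). You also use without justification that $\Pl$-a.e.\ path has a discrete zero set with $W\neq 0$ at every zero in $(0,1)$; this is true --- an accumulation point of zeros would be a visit to $(0,0)$, which is not attained from $\lambda$-a.e.\ starting point, and \eqref{mcKean} shows the return velocity is a.s.\ nonzero --- but it is an ingredient your argument relies on. By contrast, the marking step you call the main obstacle is actually the easy part: the zeros can be enumerated as stopping times (the first zero after each rational), so the strong Markov property lets you project $\Phi(e^t)$ onto $\Esp_{0,W_t}^\partial(\Phi)$ \emph{before} taking any limit, reducing the marked statement to the unmarked intensity identity.

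For comparison, the paper takes an entirely different route that avoids these analytic issues: it quotes Pitman's identity \eqref{Pitman's}, which expresses $Q_{ex}(de)\,\zeta(e)$ as the $\Pl$-law of the excursion straddling $0$, computes that law exactly at each finite horizon $T$ via Lachal's formulas \eqref{Lachal's1} and \eqref{Lachal's2} together with the normalization $\int dx\,du\,p_s(x,u;0,v)=1$, arriving at $\int dv\,|v|\,\Prob_{0,v}^\partial(Y\in de)\,\big(\zeta(e)\wedge T\big)$, and then lets $T\uparrow\infty$, where the only limit is monotone. What your approach would buy, if completed, is self-containedness (no appeal to Pitman's theorem or to Lachal's straddling-excursion computations); what the paper's buys is that every interchange is an exact Tonelli computation. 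As written, your proposal is a plausible alternative outline whose decisive step --- the $\delta\to0$ interchange --- remains unproven.
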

Let us adopt the notation $\widehat Q_{ex}$ for the law of $\widehat
Y$ under $Q_{ex}.$ The second part of the theorem can be written
$\widehat Q _{ex}=Q_{ex}.$

 Let a Langevin process start from location $0$ and have initial
velocity distributed according to $|u|du$. Then the distribution of
its velocity at the first instant when it returns to $0$ is again
$|u|du$.

 This remarkable fact can be proved directly as follows. We use the
formula found by McKean~\cite{McKean}, which gives, under
$\Prob_{0,u}$, the joint density of $\zeta$ and $W_{\zeta}$, and
which specifies the density of $W_{\zeta}$. For $u>0$ and $v \geq0$,
we have:

\begin{eqnarray} \label{mcKean joint}
 \Prob_{0,u} (\zeta \in ds, -W_{\zeta^-} \in dv)= ds dv
 \frac{3v}{\pi \sqrt 2 s^2}
\exp\Big(-2\frac{v^2-uv+u^2}{s}\Big) \int_0^\frac{4 uv}{s}
{\rm e}^{-\frac{3\theta}{2}} \frac{d \theta}{\sqrt{\pi\theta}},
\end{eqnarray}
and in particular:
\begin{equation}
\label{mcKean} \Prob_{0,u}(-W_{\zeta^-} \in dv) =
\frac{3}{2\pi}\frac{u^\Inv{2}v^\frac{3}{2}}{u^3+v^3} dv.
\end{equation}

This formulas naturally still hold when you replace $\Prob_{0,u}$ by
$\Prob_{0,u}^\partial$ and $W_{\zeta}$ by $W_{\zeta^-}$.

In the calculation, we actually just need the second formula. Let
$v$ be any positive real number. We have:

\begin{eqnarray*}
Q_{ex} (W_{\zeta^-} \in dv) &=& \int_{u=-\infty}^{0}{|u| \Prob_{0,u}^{\partial}(W_{\zeta^-} \in dv) du } \\
&=& \int_{u=0}^{+\infty} |u| \Prob_{0,u}(-W_{\zeta^-} \in dv) du \\
&=& v dv \int_{u=0}^{+\infty} \frac{3}{2\pi}\frac{u^\frac{3}{2}v^\Inv{2}}{u^3+v^3}  du \\
&=& v dv.
\end{eqnarray*}

 The integral gives one as it is the integral of the density of
$-W_{\zeta}$ under $\Prob_{0,v}$, thanks to (\ref{mcKean}). The case
$v$ negative is similar and gives us $Q_{ex} (W_{\zeta^-} \in dv)=
-v dv$, as claimed.

 \begin{proof}[Proof of Theorem 1]

1) This proof is mainly a combination of
 the work of Pitman~\cite{Pitman} translated to the Langevin process, and of known
  results on the Langevin process, results that we can find in~\cite{Lachal03}.

We recall and adapt some of their notations.

 In \cite{Lachal03}, we consider the Langevin process on positive
times, and the last instant that the process crosses zero before a
fixed time $T$ is written $\tau_T^-$. In \cite{Pitman}, we write
$G_u$ for the last instant before $u$ that the stationary process
crosses zero. The variable $G_u$ can take finite strictly negative
values, while the variable $\tau_T^-$  cannot. If $T$ is a positive
time, then we can write $\tau_T^-=\un_{G_T \geq 0} G_T $.

 In \cite{Pitman}, the part (iv) of the Theorem (p 291), rewritten
with our notations, states \footnote{Actually, the article of Pitman
states $\Pl(-\infty< G_u< u, e^{G_u} \in de)=Q_{ex}(de) \zeta(e)$
for any $u \in \Real$.} :

\begin{eqnarray} \label{Pitman's}
\Pl(-\infty< G_0< 0, e^{G_0} \in de)=Q_{ex}(de) \zeta(e)
\end{eqnarray}

In \cite{Lachal03}, the Lemma 2.5, p 129, states an important and simple relation, that can be written
\begin{equation}\label{vPv->P(zeta)}
    \Prob_{0,v}^\partial\big((Y_t,W_t) \in dx du\big) |v| dv dt = \Prob_{x,-u}(\zeta \in dt, -W_{\zeta} \in dv) dx du,
\end{equation}
 and that is a main tool used to prove the Theorem 2.6. The points 1) and 4) of this Theorem state:

 \begin{eqnarray}
\label{Lachal's1}\Prob_{x,u}^+\big\{(\tau_T^-, W_{\tau_T^-}) \in ds dv\big\} / ds dv = |v| p_s(x,u,0,v) \Prob_{0,v}^+\{\zeta > T-s\},\ \ \ \  \\
\label{Lachal's2}\Esp_{x,u}^+ \big[F(\tau_T^-,e^{\tau_T^-}_Z)|(\tau_T^-,W_{\tau_T^-}) = (s,v)\big] = \Esp_{0,v}^+[F(s, e^0_Z)|\zeta>T-s],\ \ \ \
\end{eqnarray}
where $F$ is any suitable functional, and $e_Z^t$ denotes the
excursion of the two-dimensional process started at a time $t$ such
that $Y_t=0$.

Let us now begin. From (\ref{Pitman's}), it is sufficient to prove the following:
\begin{eqnarray} \label{toprove}
\Pl(-\infty<G_0 <0, e^{G_0} \in de)= \zeta(e)
\int_{u=-\infty}^{+\infty}{|u| \Prob_{0,u}^{\partial}(Y \in de) du
}. \ \ \ \
\end{eqnarray}

We start from:
\begin{eqnarray*}
\Pl(-\infty<G_0<0, e^{G_0} \in de) &=& \lim_{T \rightarrow \infty} \Pl(-T<G_0<0, e^{G_0} \in de), \\
&=& \lim_{T \rightarrow \infty} \Pl(0<G_T<T,  e^{G_T}\in de), \\
&=& \lim_{T \rightarrow \infty} \int dxdu \Prob_{x,u}(0<G_T<T,
e^{G_T} \in de).
\end{eqnarray*}
Hence we have:

\begin{equation*}
\Pl(-\infty<G_0<0, e^{G_0} \in de) = \lim_{T \rightarrow \infty} \int dxdu \Prob_{x,u}(0<\tau_T^-<T, e^{\tau_T^-} \in de).
\end{equation*}

Let us write the term in the limit.
\begin{eqnarray*}
&&\int dxdu \ \ \Prob_{x,u}(0<\tau_T^-<T, e^{\tau_T^-} \in de) \\
 &=& \int dxdu \int \Prob_{x,u}\big((\tau_T^-, W_{\tau_T^-}) \in ds dv\big)
\ \ \ \  \Prob_{x,u}\big( e^{\tau_T^-}\in de | (\tau_T^-, W_{\tau_T^-})= (s,v)\big), \\
 &=& \int dxdu \int \Prob_{x,u}^+\big((\tau_T^-, W_{\tau_T^-}) \in ds dv\big)
\ \ \ \  \Prob_{x,u}^+\big( e^{\tau_T^-}\in de | (\tau_T^-, W_{\tau_T^-})= (s,v)\big), \\
&=& \int dxdu \int dsdv |v| p_s(x,u,0,v) \Prob_{0,v}^+\{\zeta > T-s\}
\Prob_{0,v}^+ (e^0 \in de | \zeta >T-s),
\end{eqnarray*}
where the integrals cover $(x,u) \in \Real^2$,
$(s,v) \in [0,T] \times \Real$.
In the last line we used (\ref{Lachal's1}), and (\ref{Lachal's2}) with the simple function $F\big(s,(Y,W)\big)= \mathbf{1}_{Y \in de}.$

By Fubini, the last expression is also equal to

\begin{eqnarray*}
&& \int dv |v| \int ds \Big( \int dx du \  p_s(x,u,0,v) \Big) \Prob_{0,v}^+ (e^0 \in de, \zeta >T-s).\\
&=& \int dv |v| \int_0^T ds \Prob_{0,v}^\partial (Y \in de, s>T-\zeta(e)) \\
&=& \int dv |v| \Prob_{0,v}^\partial (Y \in de) (\zeta(e) \wedge T),
\end{eqnarray*}
where we get the second line because
$$\int dx du \  p_s(x,u,0,v) =\int dx du \  p_s(0,-v;x,-u)=1.$$

Now, letting $T$ go to $\infty$ gives us (\ref{toprove}) and completes our proof.

 2) We use the definition of $Q_{ex}$ by the equation (\ref{Pit
excursion}). The time-translation and time-reversal invariance of
$\El$ gives us the time-reversal invariance of $Q_{ex}$.
\end{proof}

We point out that the measure $Q_{ex}$ has a remarkably simple potential, given by:
\begin{equation}\label{Q_ex^infty}
    \int_{\Real_+} Q_{ex}\big((Y_t,W_t) \in \bullet\big) dt = \lambda (\bullet).
\end{equation}

\begin{proof}
This is a consequence of (\ref{Q_ex}) and (\ref{vPv->P(zeta)}), that gives:
\begin{eqnarray*}
\int_{\Real_+}Q_{ex}\big((Y_t,W_t) \in dx du\big) dt &=&
\int_{\Real_+} dt
\int_{-\infty}^{+\infty} |v| dv \Prob_{0,v}^\partial\big((Y_t,W_t) \in dx du\big)  \\
   &=& dx du \int_{\Real_+} \int_{-\infty}^{+\infty}  \Prob_{x,-u}(\zeta \in dt, -W_{\zeta} \in dv)  \\
   &=&  dx du.
\end{eqnarray*}
\end{proof}

 Finally, let us notice that we get a scaling property for the
stationary excursion measure, which is a simple consequence from
(\ref{scaling}) and (\ref{Q_ex}):
\begin{equation}
Q_{ex}\Big(F \big((Y_t)_{t\ge 0}\big)\Big) = k^{-2} Q_{ex}
\Big(F\big( (k^{-3} Y_{k^2 t})_{t \ge 0}\big)\Big),
\end{equation}
where $F$ is any nonnegative measurable functional.

\subsection{Conditioning and $h$-transform}

 In the preceding section we defined the stationary excursion
measure, we described it with a simple formula and we proved its
invariance by time-reversal. This is a global result for this
measure. Now we would like to provide a more specific description
according to the starting and ending velocities of the excursions.
That is,  we would like to define and investigate the excursion
measure conditioned to start with a velocity $u$ and end with a
velocity $-v$, that would be a probability measure written
$Q_{u;v}$.

 Let us first notice that the measure $Q_{ex}(W_0 \in du,
-W_{\zeta^-} \in dv)$ has support $\{(u,v) \in \Real^2, uv \ge 0\}$.
It has a density with respect to the Lebesgue measure, that we write
$\varphi(u,v)$. This density is given, for $u>0$, $v>0$ or $u<0$,
$v<0$, by:
\begin{eqnarray*}
\varphi(u,v) &=& \Inv{dudv} \Big( |u| \Prob_{0,u}^\partial(-W_{\zeta^-} \in dv) du \Big)\\
&=&\frac 3 {2 \pi} \frac {|u|^\frac 3 2 |v|^\frac 3 2} {|u|^3+ |v|^3}. % \un_{uv>0}
\end{eqnarray*}

\begin{definition}
 We write $(Q_{u;v})_{uv>0}$ for a version of the conditional law of $Q_{ex}$ given the initial speed is $u$ and the final speed $-v$.
 That is, for $f:\Real^2\to \Real$ and $G:\E_0 \to \Real$ nonnegative measurable functionals, we have:
 \begin{equation} \label{version_loi_conditionelle}
 Q_{ex} \big(f(W_0,-W_{\zeta^-}) G\big) = \int Q_{u;v}(G) f(u,v) \varphi(u,v) du dv.
 \end{equation}
\end{definition}
It is clear that $Q_{-u;-v}$ is the image of $Q_{u;v}$ by the
symmetry  $Y \to -Y$, for almost all $(u, v)$, so that in the
following we will only be interested in $Q_{u;v}$ for $u>0$, $v>0$.

 From the time-reversal invariance of the stationary excursion
measure, i.e $\widehat Q_{ex}= Q_{ex}$, we deduce immediately the
following time-reversal property of the conditioned measures:
\begin{equation}\label{uv-vu}
    \widehat{Q}_{u;v} = Q_{v;u} \qquad \hbox{ for a.~a. } (u,v) \in (\Real_+)^2.
\end{equation}

 Recall from the formula (\ref{Q_ex}) that $|u|
\Prob_{0,u}^\partial$ is a version of the conditional law of
$Q_{ex}$ given the initial speed $u$. It follows that we have the
following formula:
\begin{equation}\label{Q_u}
    \Prob_{0,u}^\partial = |u|^{-1} \int Q_{u;v} \varphi(u,v) dv \quad \hbox{ for almost all } u>0
\end{equation}

 The measure $|u|^{-1} \varphi(u,v) dv$ is the law of $-W_{\zeta^-}$
under $\Prob_{0,u}^\partial$. Hence $Q_{u;v}$ is a version of the
conditional law of $\Prob_{0,u}^\partial$ given $-W_{\zeta^-}=-v$.
Before going on, we need precise informations on the variable
$-W_{\zeta^-}$ and its law, under different initial conditions. The
results we need are gathered in the following lemma. We take the
notations $\Real_+^*$ for $\Real_+ \backslash \{0\}$, and $D$ for
the domain $\big((\Real_+^*) \times \Real\big) \bigcup \big(\{0\}
\times (\Real_+^*) \big)$.

\begin{lemma} \label{lemme_technique}

$\bullet$ For any $(x,u)$  in $D$, the density of the law of the
variable $-W_{\zeta^-}$ under $\Prob_{x,u}^\partial$ with respect to
the Lebesgue measure on $(0,\infty)$ exists and is written
$\h_v(x,u)$ for $v>0$. We have:
\begin{eqnarray} \label{h_v}
\h_v(x,u) &=& v \Big[ \Phi_0 (x,u;-v) - \frac{3}{2\pi} \int_0^\infty \frac{\mu^\frac{3}{2}}{\mu^3+1}
\Phi_0(x,u;\mu v ) d\mu \Big], \ \ \ \ \ \ \ \
\end{eqnarray}
where $ \Phi_0(x,u;v):= \Phi(x,u;0,v)$ and
$$ \Phi(x,u;y,v) := \int_0^\infty p_t(x,u;y,v) dt.$$
For $x=0$, this formula can be simplified as:
\begin{eqnarray} \label{h_v0}
\h_v(0,u) &=& \frac{3}{2\pi}\frac{u^\Inv{2}v^\frac{3}{2}}{u^3+v^3}.
\end{eqnarray}

$\bullet$ The function  $(v,x,u) \rightarrow \h_v(x,u)$ is
continuous on $E:=\Real_+^* \times D$.  The function $\Phi_0$ is
continuous and differentiable on $D\times \Real$. Moreover, we have
the following equivalence for $v$ in the neighborhood of zero:
\begin{equation}
\label{equiv} \h_v(x,u) \sim \overline{\h_0}(x,u) v^\frac{3}{2},
\end{equation}
where $\overline{\h_0}(x,u)$ is given by
$$ \overline{\h_0}(x,u)=\frac{3}{\pi}\int \alpha^{-\Inv{2}} \frac {\partial \Phi_0}{\partial v}(x,u;\alpha) d\alpha.$$
For $x=0$, this formula can be simplified as
 $$\overline{\h_0}(0,u)= \frac{3u^\Inv{2}}{2 \pi}.$$
\end{lemma}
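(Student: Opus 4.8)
The plan is to read off the formula (\ref{h_v}) from the ingredients already assembled rather than to compute a hitting law from scratch. First I would apply Lachal's relation (\ref{vPv->P(zeta)}): integrating it over $t\in(0,\infty)$ and recognising on the left the Green density $G_v^\partial$ of the Langevin process started at $(0,v)$ and killed at its first return to $0$, defined by $G_v^\partial(x,u)\,dx\,du:=\int_0^\infty \Prob_{0,v}^\partial(Z_t\in dx\,du)\,dt$, one gets $\h_v(x,-u)=|v|\,G_v^\partial(x,u)$, i.e.\ $\h_v(x,u)=|v|\,G_v^\partial(x,-u)$. Next I would express the killed Green density through the free one by a first-return decomposition: since $Z$ is recurrent, $\zeta<\infty$ almost surely, $Z_\zeta=(0,W_{\zeta^-})$, and $-W_{\zeta^-}$ has McKean's density (\ref{mcKean}), the strong Markov property at $\zeta$ gives
\[
\Phi(0,v;x,u)=G_v^\partial(x,u)+\int_0^\infty \frac{3}{2\pi}\frac{v^{\Inv 2}w^{\frac 3 2}}{v^3+w^3}\,\Phi(0,-w;x,u)\,dw .
\]
Evaluating this at the point $(x,-u)$ and invoking the duality identity (\ref{duality}), which yields $\Phi(0,v;x,-u)=\Phi_0(x,u;-v)$ and $\Phi(0,-w;x,-u)=\Phi_0(x,u;w)$, the change of variables $w=\mu v$ turns the kernel into $\frac{3}{2\pi}\frac{\mu^{3/2}}{1+\mu^3}$ and produces exactly (\ref{h_v}). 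The simplified form (\ref{h_v0}) needs no computation at all: it is McKean's density (\ref{mcKean}) itself, since $-W_{\zeta^-}$ has the same law under $\Prob_{0,u}^\partial$ as under $\Prob_{0,u}$.

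For the regularity statements the object to control is $\Phi_0(x,u;v)=\int_0^\infty p_t(x,u;0,v)\,dt$, which I would analyse directly from the explicit expression for $p_t$. As $t\to\infty$ one has $p_t\sim \sqrt3/(\pi t^2)$, integrable at infinity; as $t\to0$ the exponent forces rapid decay, like $\exp(-6x^2/t^3)$ when $x\neq0$ and like $\exp\!\big(-2(u^2+uv+v^2)/t\big)$ when $x=0$, the quadratic form $u^2+uv+v^2$ being strictly positive. This gives convergence of the integral on $D\times\Real$. Differentiating under the integral sign (the $v$-derivative only inserts a polynomial in $1/t$) and dominating locally uniformly on compact subsets of $D\times\Real$ yields continuity and differentiability of $\Phi_0$; continuity of $(v,x,u)\mapsto\h_v(x,u)$ on $E$ then follows from (\ref{h_v}) together with dominated convergence in the $\mu$-integral.

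For the asymptotic (\ref{equiv}) I would write $\h_v(x,u)=v\,B(x,u;v)$, where $B$ is the bracket in (\ref{h_v}). The key point is that the leading term of $B$ cancels: by the Beta integral $\frac{3}{2\pi}\int_0^\infty \frac{\mu^{3/2}}{1+\mu^3}\,d\mu=\frac{3}{2\pi}\cdot\frac{\pi}{3\sin(5\pi/6)}=1$ (using $\sin(5\pi/6)=\frac{1}{2}$), so that $B(x,u;0)=\Phi_0(x,u;0)(1-1)=0$. To extract the next order I would substitute $\alpha=\mu v$, rewrite $B$ as $\Phi_0(x,u;-v)-\Phi_0(x,u;0)-\frac{3}{2\pi}v^{\Inv 2}\int_0^\infty \frac{\alpha^{3/2}}{\alpha^3+v^3}\,[\Phi_0(x,u;\alpha)-\Phi_0(x,u;0)]\,d\alpha$, observe that the first difference is $O(v)=o(v^{\Inv 2})$, and pass to the limit in the remaining integral. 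An integration by parts then recasts the resulting $v^{\Inv 2}$-coefficient in the form of $\overline{\h_0}$ announced after (\ref{equiv}), with the overall constant pinned down by comparison with the explicit case (\ref{h_v0}).

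I expect the main obstacle to be the justification of this last interchange of limit and integral. As $v\to0$ the kernel $\alpha^{3/2}/(\alpha^3+v^3)$ degenerates to $\alpha^{-3/2}$, which is \emph{not} integrable at $\alpha=0$; convergence is rescued only because $\Phi_0(x,u;\alpha)-\Phi_0(x,u;0)\sim\alpha\,\frac{\partial\Phi_0}{\partial v}(x,u;0)$ vanishes there. Making this uniform, and uniform as $(x,u)$ approaches the boundary of $D$ (in particular the corner $(0,0)$, where the small-$t$ control on $p_t$ degenerates), is precisely the point where the $C^1$ regularity of $\Phi_0$ established above is indispensable.
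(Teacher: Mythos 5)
Your proposal is correct, and for the first bullet it takes a genuinely different route from the paper: the paper does not prove (\ref{h_v}) at all, but cites Gor$'$kov and Lachal for general $(x,u)\in D$ and observes (following Lachal) that $x=0$ reduces to McKean's formula (\ref{mcKean}). Your derivation --- integrating the relation (\ref{vPv->P(zeta)}) in $t$ to identify $\h_v(x,u)=v\,G_v^\partial(x,-u)$ with $G_v^\partial$ the Green density of the process started at $(0,v)$ and killed at $\zeta$, removing the post-$\zeta$ occupation of the free Green function via the strong Markov property at $\zeta$ together with McKean's law of $-W_{\zeta^-}$, and translating through the duality (\ref{duality}) --- is sound (the change of variables $w=\mu v$ does produce the kernel $\frac{3}{2\pi}\frac{\mu^{3/2}}{1+\mu^3}$ exactly), and it buys a self-contained proof from ingredients the paper already quotes, where the paper settles for a citation. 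One caveat you gloss over: (\ref{vPv->P(zeta)}) is an identity of \emph{measures} in $(x,u,v,t)$, so as written your argument identifies the density only for Lebesgue-almost every $(x,u)$, while the lemma asserts it for \emph{every} $(x,u)\in D$; upgrading requires the continuity of the right-hand side of (\ref{h_v}) (which your second part supplies) plus a short argument on the left (e.g.\ the Markov property at a small time $\eps$), or else the pointwise statements of the cited references. For the second bullet your route essentially coincides with the paper's: the same small-$t$ bounds on the exponent ($\exp(-6x^2/t^3)$ for $x\neq0$, $\exp\big(-2(u^2+uv+v^2)/t\big)$ for $x=0$, $u>0$) with dominated convergence give the regularity of $\Phi_0$; and your scheme for (\ref{equiv}) --- cancel the leading term using the unit mass of $\frac{3}{2\pi}\frac{\mu^{3/2}}{1+\mu^3}\,d\mu$, dominate the kernel by $\alpha^{3/2}/(\alpha^3+v^3)\le\alpha^{-3/2}$ against $\Phi_0(x,u;\alpha)-\Phi_0(x,u;0)=O(\alpha)$, then integrate by parts --- is the mirror image of the paper's computation, which integrates by parts first, via $\Esp\big[\Phi_0(x,u,0)-\Phi_0(x,u,v\xi)\big]=-\int\Prob(v\xi\ge\mu)\,\frac{\partial\Phi_0}{\partial v}(x,u,\mu)\,d\mu$, and then applies dominated convergence with the tail bound $\Prob(\xi\ge a)\le\frac{3}{\pi}a^{-1/2}$; your $\xi$-free presentation and the paper's random variable $\xi$ are the same mechanism in different order. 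Note that the equivalence (\ref{equiv}) is asserted pointwise at fixed $(x,u)\in D$, so the uniformity near the boundary of $D$ that worries you at the end is not actually required. Finally, your instinct to pin the constant against (\ref{h_v0}) is worth carrying out explicitly: done carefully, both your computation and the paper's own proof (whose limit function is $f_0(\mu)=-\frac{3}{\pi}\mu^{-1/2}\frac{\partial\Phi_0}{\partial v}(x,u,\mu)$) yield $\overline{\h_0}(x,u)=-\frac{3}{\pi}\int_0^\infty\alpha^{-1/2}\frac{\partial\Phi_0}{\partial v}(x,u;\alpha)\,d\alpha\ \ge 0$, and (\ref{h_v0}) gives $\overline{\h_0}(0,u)=\frac{3}{2\pi}u^{-5/2}$; the sign and the exponent printed in the lemma's statement appear to be typos, which precisely this comparison step would catch.
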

 This is a technical lemma, with a long proof that we report in the Appendix.

 The idea is now, thanks to this lemma, to prove that the law
$\Prob_{0,u}^\partial$  conditioned on the event $-W_{\zeta^-} \in
[v,v+\eta]$, has a limit when $\eta$ goes to zero. This limit is
necessarily $Q_{u;v}$ a.~s. Hence we get an expression for
$Q_{u;v}$, that will happen to be a bi-continuous version.

 Let us fix $u,v,t>0$, and let  $\phi_t$ be an
$\mathcal{F}_t$-measurable nonnegative functional. We have:

\begin{eqnarray*}
 \lim_{\eta \rightarrow 0} \Esp_{0,u}^\partial\big(\phi_t \un_{ \zeta >t} | -W_{\zeta^-} \in [v,v+\eta]\big)
&=& \lim_{\eta \rightarrow 0} \frac{ \Esp_{0,u}^\partial(\phi_t
\un_{ \zeta >t,-W_{\zeta^-} \in [v,v+\eta]} ) }
{\Prob_{0,u}^\partial(-W_{\zeta^-} \in [v,v+\eta]) } \\
&=& \Esp_{0,u}^\partial \left( \phi_t \un_{ \zeta >t} \lim_{\eta
\rightarrow 0} \frac{\Prob_{Y_t,W_t}^\partial (-W_{\zeta^-} \in
[v,v+\eta])} {\Prob_{0,u}^\partial(-W_{\zeta^-} \in [v,v+\eta])}
\right).
\end{eqnarray*}

 The limit exists and is equal to the quotient of $\h_v(Y_t,W_t)$ by $\h_v(0,u)$. Hence, we get:

\begin{eqnarray} \label{Q_u,v}
Q_{u;v} (\phi_t \un_{ \zeta >t}) = \Esp_{0,u}^\partial \left( \phi_t
\un_{ \zeta >t} \frac{\h_v(Y_t,W_t)}{\h_v(0,u)} \right).
\end{eqnarray}
for any $t>0$, any  $\mathcal{F}_t$-measurable functional $\phi_t$.

 From the continuity of $h$ we deduce that $Q_{u;v}$ is jointly continuous in $u, v$, $(u,v) \in (\Real_+^*)^2$.
Furthermore, thanks to (\ref{equiv}), when $v$ goes to zero, the
quotient goes to
$\frac{\overline{\h_0}(Y_t,W_t)}{\overline{\h_0}(0,u)}$. We deduce
that the measures $Q_{u;v}$ have a weak limit when $v$ goes to zero,
that we write $Q_{u;0}$. We have

\begin{eqnarray} \label{Q_u,0}
 Q_{u;0} (\phi_t \un_{ \zeta >t}) = \Esp_{0,u}^\partial \left( \phi_t \un_{ \zeta >t} \frac{\overline{\h_0}(Y_t,W_t)}{\overline{\h_0}(0,u)} \right).
 \end{eqnarray}

 This shows that these measures $Q_{u;v}$ make appear $\h$-transforms
of the usual probability transitions of the Langevin process
$\Esp_{0,u}$. The $h$-transforms are common when dealing with
conditioned Markov process, see for example \cite{Azema}, and in
particular the chapters \emph{4.7.} and \emph{6.4.} for the
connection with time-reversal.

 Informally, in the case of two processes in duality, changing the
initial condition for one process corresponds to changing the
probability transitions of the second process into an $h$-transform
of these probability transitions. The $h$-transform means the
measure ``conditioned" with using a certain harmonic function $h$,
that we can write explicitly.

 We finish this section with giving the scaling property of the
measures $Q_{u;v}$, that follows for example from (\ref{scaling})
and (\ref{version_loi_conditionelle}):

\begin{proposition} For any $u > 0$, $v \ge 0$, we have:
\begin{equation}\label{Q_ku,kv}
    Q_{u;v} \Big(F \big((Y_t)_{t\ge 0}\big)\Big) = Q_{ku;kv} \Big(F\big( (k^{-3} Y_{k^2 t})_{t \ge 0}\big)\Big),
\end{equation}
where F is any nonnegative measurable functional.
\end{proposition}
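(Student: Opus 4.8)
The plan is to read off how the scaling operation acts on the two boundary velocities and on the density $\varphi$, and then to transport the already-established scaling property of $Q_{ex}$ through the disintegration (\ref{version_loi_conditionelle}). Write $\Theta_k$ for the map $Y \mapsto (k^{-3}Y_{k^2 t})_{t\ge 0}$ acting on $\E_0$, for $k>0$; it sends a vertical excursion to a vertical excursion and multiplies the lifetime $\zeta$ by $k^{-2}$. Differentiating, the velocity of $\Theta_k Y$ at time $t$ equals $k^{-1}W_{k^2 t}$, so $\Theta_k$ acts on the pair of boundary velocities by $(W_0,-W_{\zeta^-}) \mapsto (k^{-1}W_0,\,k^{-1}(-W_{\zeta^-}))$. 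I would also record the crucial homogeneity $\varphi(ku,kv)=\varphi(u,v)$, which is immediate from the explicit formula for $\varphi$ since both its numerator and its denominator scale like $k^{3}$.

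Next I would feed the product functional $\Psi=f(W_0,-W_{\zeta^-})\,F$ into the scaling property of $Q_{ex}$ obtained just above (itself a consequence of (\ref{scaling}) and (\ref{Q_ex})), which reads $Q_{ex}(\Psi)=k^{-2}Q_{ex}(\Psi\circ\Theta_k)$. By the action of $\Theta_k$ on the velocities, $\Psi\circ\Theta_k=f(k^{-1}W_0,k^{-1}(-W_{\zeta^-}))\,(F\circ\Theta_k)$. Disintegrating both sides with (\ref{version_loi_conditionelle}) gives, on the left, $\int Q_{u;v}(F)\,f(u,v)\,\varphi(u,v)\,du\,dv$, and on the right $k^{-2}\int Q_{u;v}(F\circ\Theta_k)\,f(k^{-1}u,k^{-1}v)\,\varphi(u,v)\,du\,dv$. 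In the latter I would substitute $u=ku'$, $v=kv'$: the Jacobian contributes a factor $k^{2}$ that cancels the $k^{-2}$, while $f(k^{-1}u,k^{-1}v)=f(u',v')$ and the invariance $\varphi(ku',kv')=\varphi(u',v')$ leave exactly $\int Q_{ku';kv'}(F\circ\Theta_k)\,f(u',v')\,\varphi(u',v')\,du'\,dv'$.

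Equating the two expressions for every nonnegative measurable $f$ forces $Q_{u;v}(F)=Q_{ku;kv}(F\circ\Theta_k)$ for $\varphi(u,v)\,du\,dv$-almost every $(u,v)$, hence for Lebesgue-almost every $(u,v)$ with $uv>0$ since $\varphi>0$ there, which is precisely the asserted identity once one recognises $F\circ\Theta_k$ as $F\big((k^{-3}Y_{k^2 t})_{t\ge0}\big)$. To pass from this almost-everywhere statement to the claim for every $u>0$, $v\ge0$, I would invoke the joint continuity of $(u,v)\mapsto Q_{u;v}$ established after (\ref{Q_u,v}): for bounded continuous $F$ both sides are continuous in $(u,v)$ on $(\Real_+^*)^2$, so the a.e.\ equality propagates to all $(u,v)$ with $u,v>0$; the boundary case $v=0$ then follows by letting $v\to0$ (so that $kv\to0$) and using the weak limit $Q_{u;0}$ of (\ref{Q_u,0}). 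A monotone-class argument finally extends the identity from bounded continuous $F$ to arbitrary nonnegative measurable $F$.

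The algebra here is clean — the homogeneity of $\varphi$ and the cancellation of the Jacobian against $k^{-2}$ do all the work — so the main obstacle is not computational but the upgrade from the almost-everywhere identity produced by testing against $f$ to the pointwise statement of the proposition, together with the degenerate boundary value $v=0$. This is exactly the place where the continuity of the $h$-transform representation (\ref{Q_u,v}) and the existence of the weak limit (\ref{Q_u,0}) are indispensable.
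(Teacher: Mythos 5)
Your proof is correct and follows essentially the route the paper itself takes: the paper offers no detailed argument, saying only that the proposition ``follows for example from (\ref{scaling}) and (\ref{version_loi_conditionelle})'', and your computation --- pushing the scaling identity for $Q_{ex}$ (itself derived from (\ref{scaling}) and (\ref{Q_ex})) through the disintegration (\ref{version_loi_conditionelle}), with the degree-zero homogeneity of $\varphi$ cancelling the Jacobian against $k^{-2}$ --- is precisely that route made explicit. Your extra care in upgrading the resulting almost-everywhere identity to every $u>0$, $v\ge 0$, via the jointly continuous $h$-transform version (\ref{Q_u,v}) and the weak limit (\ref{Q_u,0}), handles a point the paper passes over in silence.
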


\section{Reflected Kolmogorov process}

We begin this section on a new basis, with introducing a process
that has been studied recently. This is only in a second part that
the definitions that we developed before will be used for that
process.

\subsection{Preliminaries on the reflected Kolmogorov process}

 The question of the existence of the Langevin process reflected at a
completely inelastic boundary was raised by B. Maury in 2004 in
\cite{Maury}. The answer came in \cite{reflecting}, where J. Bertoin
proves the existence of that process and its uniqueness in law. We
also mention another paper \cite{SDE} that studies the problem of
the reflected Langevin process from the point of view of stochastic
differential equations.

\begin{definition}
We say that $(X,V)$ is a Kolmogorov process reflected at a
completely inelastic boundary (or just reflected Kolmogorov process)
if it is a c\`adl\`ag strong Markov process with values in $\Real_+
\times \Real$ which starts from $(0,0)$, such that $V$ is the
right-derivative of $X$, and also:
$$\int_0^\infty \mathbf{1}_{ \{X_t=0\} } dt = 0 \hbox{     and     } \big( X_t=0 \ \Rightarrow \ V_t=0 \big) a.s., $$
and which ``evolves as a Kolmogorov process when $X>0$'', in the following sense:
\begin{quote}
    \emph{For every stopping time $S$ in the natural filtration $(\mathcal F_t)_{t\geq 0}$ of $X$,
    conditionally on  $X_S= x > 0$ and $V_S = v$,
  the shifted process $(X_{S+t})_{t \geq 0}$ stopped when hitting 0 is independent of $\mathcal F_S$, and has the distribution of a Langevin process
started with velocity $v$ from the location $x$ and stopped when
hitting 0.}
\end{quote}

We say that $X$ is a Langevin process reflected at a completely
inelastic boundary (or just reflected Langevin process) if $(X,V)$
is a reflected Kolmogorov process.
\end{definition}

 In the following we choose the vocabulary and the notations of the
one-dimensional process, that is the Langevin process, to state our
results.

 In his paper Bertoin gives an explicit construction of a reflected
Langevin process: Starting from a Langevin process $Y$, he first defines a process $\tilde{X}$ using Skorokhod's reflection:

$$\tilde{X}_t = Y_t- \inf_{0 \le s \le t} Y_s.$$
Let us notice that an excursion of that process does take off with
zero velocity. However, that process cannot be the right one because
$$ \int_0^\infty \un_{\{\tilde X _t=0\}} dt =\infty \  a.s,$$
while we require $$ \int_0^\infty \un_{\{\tilde X _t=0\}} dt =0 \
a.s.$$ Further, it is easy to check that $(\tilde X, \tilde V)$
fails to be Markovian. But Bertoin then introduces a change of time,
with writing $$ T_t := inf \Big \{ s \geq 0: \int_0^s \mathbf{1}_{
\tilde{X}_u >0 }du > t \Big \}$$ and $$X_t := \tilde{X} \circ T_t.
$$ This process $X$ is a reflected Langevin process. The same paper
also proves \footnote{The idea of the above construction is still a
central point of the proof} the uniqueness of the law of a reflected
Langevin process, so that we will speak about \emph{the} reflected
Langevin process. In the rest of the paper, we will concentrate our
attention on what is one of the first steps in the study of this
process, that is to say its It\^o excursion measure. We recall that
it is unique up to a multiplicative constant.

\subsection{Itô excursion measure of the reflected Langevin process}

In this section we will thus deal with the excursions of the
reflected Langevin process. For the sake of convenience, we use here
the notation $X$ for the canonical smooth process, $V$ for its
derivative.

We consider the ``set of ends of vertical excursions" $\E$, that is
the set of excursions, except that we do not require anymore that
the excursions should start from position 0. This set, endowed with
the supremum norm of the process and its derivative, is a metric
space including $\E_0$. In the following, we write $F: \E
\rightarrow \Real$ for a general continuous bounded functional which
is identically 0 on some neighborhood of the path $X \equiv 0$.

 We are ready to state a first formula, given\footnote{Actually
Bertoin states this result in a slightly different form, as the set
of excursions he considers is not the exactly same as the one we
consider here. Nevertheless, his argument still works in our
settings.} by Bertoin \cite{reflecting}:

\begin{proposition} \label{proposition} The following limit
$$\n\big(F(X)\big) := \lim_{x \rightarrow 0+} x^{-\Inv{6}} \Esp_{x,0}^{\partial}\big(F(X)\big),$$
exists and defines uniquely a measure on $\E$ with $\n({0})=0$, and
which support is included in $\E_0$. The measure $\n$ is an It\^o
excursion measure of the reflected Langevin process.
\end{proposition}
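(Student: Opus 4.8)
The plan is to prove convergence by conditioning the killed excursion on its terminal velocity, letting the two resulting factors converge separately, and then interchanging limit and integral; the scaling relation (\ref{scaling}) pins down the exponent $\Inv6$ and Lemma \ref{lemme_technique} controls the densities. Recall that under $\Prob_{x,0}^\partial$ (with $x>0$) the position hits $0$ from above, so the terminal velocity $-W_{\zeta^-}$ is positive and, by Lemma \ref{lemme_technique}, has density $\h_v(x,0)$. Writing $\Esp_{x,0;v}^\partial$ for $\Prob_{x,0}^\partial$ conditioned on $-W_{\zeta^-}=v$, I would start from
\begin{equation*}
\Esp_{x,0}^\partial\big(F(X)\big)=\int_0^{\infty}\h_v(x,0)\,\Esp_{x,0;v}^\partial\big(F(X)\big)\,dv ,
\end{equation*}
reducing the problem to the two limits $x^{-\Inv6}\h_v(x,0)\to\psi(v)$ and $\Esp_{x,0;v}^\partial(F(X))\to Q_{0;v}(F)$ as $x\to0+$, together with a domination argument.

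First I would determine $\psi$. Applying (\ref{scaling}) to the terminal velocity yields the exact scaling identity $\h_v(k^{-3}b,0)=k\,\h_{kv}(b,0)$ for every base point $b>0$. Taking $b=1$ and $k=x^{-\Inv3}$, so that $k^{-3}b=x$, gives $\h_v(x,0)=k\,\h_{kv}(1,0)$, whence
\begin{equation*}
x^{-\Inv6}\h_v(x,0)=k^{\frac32}\h_{kv}(1,0)=v^{-\frac32}\,(kv)^{\frac32}\,\h_{kv}(1,0),\qquad k=x^{-\Inv3}\to\infty .
\end{equation*}
Hence the limit exists, equals $\psi(v)=A\,v^{-\frac32}$, and the normalisation $\Inv6$ is the correct one, precisely when the terminal-velocity density from $(1,0)$ has the tail $\h_w(1,0)\sim A\,w^{-\frac32}$ with $0<A<\infty$; I would read this off the explicit expression (\ref{h_v}) for $\h$, the boundary formula (\ref{h_v0}) already exhibiting a $w^{-\frac32}$ decay. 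For the conditioned laws I would use the $\h$-transform representation, exactly as in (\ref{Q_u,v}), to write $\Esp_{x,0;v}^\partial(\phi_t\un_{\zeta>t})=\Esp_{x,0}^\partial\big(\phi_t\un_{\zeta>t}\,\h_v(Y_t,W_t)/\h_v(x,0)\big)$, and identify the limit with the time-reversed conditioned excursion measure $Q_{0;v}=\widehat Q_{v;0}$ of (\ref{uv-vu})--(\ref{Q_u,0}); reading this on the reversed side keeps the regular endpoint $(0,v)$ fixed, so the continuity of $\h$ asserted in Lemma \ref{lemme_technique} applies directly and the argument is not circular.

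Putting these together I would obtain the explicit candidate
\begin{equation*}
\n(F)=A\int_0^{\infty}v^{-\frac32}\,Q_{0;v}(F)\,dv ,
\end{equation*}
which is manifestly a measure on $\E$: a mixture of the probability measures $Q_{0;v}$ against the $\sigma$-finite weight $A\,v^{-\frac32}dv$, with uniqueness following since the bounded continuous functionals $F$ vanishing near the null path determine it. Its support lies in $\E_0$ because each $Q_{0;v}$ is carried by excursions issued from $(0,0)$, and $\n(\{0\})=0$ because $F$ vanishes near the null path. The interchange of limit and integral is the one genuinely delicate point and is where I expect the main work to lie: near $v=\infty$ the crude bound $Q_{0;v}(F)\le\|F\|_\infty$ against $v^{-\frac32}dv$ is harmless, but near $v=0$ one must exploit the scaling (\ref{Q_ku,kv}) of $Q_{0;v}$ and the fact that $F$ is supported away from the null path to show both that $v^{-\frac32}Q_{0;v}(F)$ is integrable and that the pre-limit integrands are uniformly dominated for small $x$.

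Finally I would identify $\n$ as an It\^o excursion measure of the reflected Langevin process. Using Bertoin's construction recalled above, the excursions of $X$ coincide with those of $Y$ above its running infimum (the time change acts only on the Lebesgue-null zero set), and these take off from $(0,0)$. I would check that they form a Poisson point process whose characteristic measure is proportional to $\n$, by comparing, through the strong Markov property, the law of such an excursion observed from its first passage to a small height $x$ with $\Prob_{x,0}^\partial$, and verifying that the first-passage weight contributes exactly the factor $x^{-\Inv6}$; Bertoin's uniqueness of the It\^o measure up to a multiplicative constant then closes the argument.
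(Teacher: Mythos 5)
First, a point of context: the paper does not prove this proposition at all --- it is quoted from Bertoin \cite{reflecting}, with a footnote noting only that his argument adapts to the present set of excursions. So your attempt must stand on its own, and its two load-bearing steps do not. The central gap is the claimed convergence of the conditioned laws $\Esp_{x,0;v}^\partial \to Q_{0;v}=\widehat Q_{v;0}$ as $x\to 0+$, which you assert follows ``directly'' from the continuity of $\h$ in Lemma \ref{lemme_technique}. That lemma gives continuity of $(v,x,u)\mapsto \h_v(x,u)$ only on $E=\Real_+^*\times D$, and $(0,0)\notin D$: in the representation analogous to (\ref{Q_u,v}) the denominator $\h_v(x,0)$ tends to $0$ (at the rate $x^{\Inv 6}$ --- that is precisely what the normalization compensates), so no continuity statement lets you pass to the limit; what is needed is an entrance-law result at the degenerate corner $(0,0)$, of the same depth as the proposition itself. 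The reversed-side remark does not repair this: time-reversing $\Prob_{x,0}^\partial$ from its lifetime produces the conjugate killed process $\h$-transformed by the Green density attached to the interior death point $(x,0)$, not by $\h_v$, and the normalized limit of that Green density as $x\to0$ is again exactly the boundary asymptotic in question. The same is true of your weight $\psi$: the scaling identity $x^{-\Inv 6}\h_v(x,0)=k^{\frac 32}\h_{kv}(1,0)$, $k=x^{-\Inv 3}$, is correct and does pin down the exponent, but the required tail $\h_w(1,0)\sim A\,w^{-\frac 32}$ with $0<A<\infty$ is, via (\ref{vPv->P(zeta)}) and scaling, a reformulation of the same $x^{\Inv 6}$ boundary decay; Lemma \ref{lemme_technique} only provides the $v\to0$ equivalence (\ref{equiv}) at fixed $(x,u)\in D$, not large-$v$ tails, and not the uniformity you would need for the (admittedly flagged) domination step. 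So both pillars of your convergence argument are restatements of the unproven core estimate rather than consequences of the paper's lemmas.

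The identification of the limit as an It\^o excursion measure is also flawed as sketched: under the excursion measure of the reflected process, the velocity at the first passage time to a fixed level $x$ is a.s.\ strictly positive, so the post-passage law is not comparable to $\Prob_{x,0}^\partial$, and no ``first-passage weight $x^{-\Inv 6}$'' arises at such times. Zero-velocity starting points $(x,0)$ occur at local minima of the Langevin path --- in Bertoin's construction the excursions of $Y$ above its past infimum take off with zero velocity, as the paper recalls --- and it is through that structure, not first passages to a level, that $\Prob_{x,0}^\partial$ enters the genuine proof. On the positive side: your candidate formula $\n(F)=A\int_0^\infty v^{-\frac 32}Q_{0;v}(F)\,dv$ is consistent with the paper's Theorem 2 together with Proposition 3 (which give $\n'=c_1\n$ and the mixture representation of $\n'$), a real sanity check, and your use of the Section 4.2 material is indeed non-circular within the paper's logical order. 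But note that in the paper those results are themselves derived \emph{from} this proposition, so they cannot be invoked to close your argument; as it stands, the proposal establishes neither the existence of the limit nor the It\^o identification.
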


 This is to say, we get an expression for the It\^o excursion measure
of the reflected Langevin process as a limit of known measures.

 This result resembles the classical approximation of the It\^o
measure of the absolute value of the Brownian motion by $x^{-1}
\Prob_x^\partial$, where $\Prob_x^\partial$ is the law of the
Brownian motion starting from $x$ and killed when hitting 0.

 As a consequence of this expression, we can give the scaling
property of this measure, also mentioned in \cite{reflecting},
Proposition 2:

\begin{corollary}
We have:
$$\n  \Big(F \big((X_t)_{t\ge 0}\big)\Big) = k^\Inv 2 \n\Big(F\big( (k^{-3} X_{k^2 t})_{t \ge 0}\big)\Big),$$
for any nonnegative measurable functional $F$.
\end{corollary}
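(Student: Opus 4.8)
The plan is to read the corollary off directly from the two facts already in hand: the representation of $\n$ as the limit $\n(F(X)) = \lim_{x\to 0+} x^{-\Inv 6}\,\Esp_{x,0}^\partial(F(X))$ furnished by Proposition~\ref{proposition}, and the scaling identity~(\ref{scaling}) for the Langevin process. Once these are combined, the whole statement collapses to matching the prefactor exponent $-\Inv 6$ against the spatial dilation factor $k^3$ carried by~(\ref{scaling}). Fix $k>0$ and set $G(X) := F\big((k^{-3}X_{k^2 t})_{t\ge 0}\big)$. Since the map $(X_t)_{t\ge0}\mapsto (k^{-3}X_{k^2t})_{t\ge0}$ is continuous and fixes the path $X\equiv 0$, the functional $G$ is again continuous, bounded and identically $0$ in a neighborhood of $X\equiv 0$; hence $\n(G)$ is well defined and the right-hand side $k^\Inv 2\,\n(G)$ makes sense, so that the claim is precisely $\n(F) = k^\Inv 2\,\n(G)$.

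The key step is to check that~(\ref{scaling}), stated for the free process under $\Esp^+$, passes to the killed law $\Esp_{x,0}^\partial$. This holds because the dilation preserves the zero set of the trajectory: one has $k^{-3}X_{k^2 t}=0$ if and only if $X_{k^2 t}=0$, so the first return time to $0$ is merely multiplied by $k^{-2}$ and killing at that time is respected by the map. Taking $u=0$ in~(\ref{scaling}) therefore gives $\Esp_{x,0}^\partial(F(X)) = \Esp_{k^3 x,0}^\partial(G(X))$, and rewriting this with $y=k^3 x$ yields $\Esp_{y,0}^\partial(G(X)) = \Esp_{k^{-3}y,0}^\partial(F(X))$.

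It then remains to substitute this into the limit defining $\n(G)$ and carry out the change of variable $x=k^{-3}y$. Because $y^{-\Inv 6}=(k^3 x)^{-\Inv 6}=k^{-\Inv 2}x^{-\Inv 6}$ and $y\to 0+$ is equivalent to $x\to 0+$ for fixed $k>0$, we obtain
$$\n(G) = \lim_{y\to 0+} y^{-\Inv 6}\,\Esp_{y,0}^\partial(G(X)) = \lim_{x\to 0+} k^{-\Inv 2}x^{-\Inv 6}\,\Esp_{x,0}^\partial(F(X)) = k^{-\Inv 2}\,\n(F),$$
which rearranges to the announced identity $\n\big(F((X_t))\big) = k^\Inv 2\,\n(G)$. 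The only point that genuinely needs care is the transfer of~(\ref{scaling}) to the killed process; once the invariance of the zero set under the dilation is observed, everything else is exponent bookkeeping, and the existence of the two limits is guaranteed by Proposition~\ref{proposition} applied to both $F$ and $G$.
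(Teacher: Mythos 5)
Your proof is correct and takes essentially the same route as the paper's: combine the limit representation of $\n$ from Proposition~\ref{proposition} with the scaling identity~(\ref{scaling}) at $u=0$, and absorb the prefactor via $x^{-\Inv 6}=k^{\Inv 2}(k^3x)^{-\Inv 6}$ as $x\to 0+$. The only difference is that you spell out two points the paper leaves tacit — that the dilation preserves the zero set and hence commutes with killing at the first return to $0$, and that the transformed functional $G$ stays in the admissible class so that $\n(G)$ is defined — which is a welcome tightening rather than a new argument.
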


\begin{proof}
Let $F$ be a general continuous bounded
functional which is identically 0 on some neighborhood of the path
$e \equiv 0$. The proposition gives us:
\begin{eqnarray*}
  \n\big( F((X_t)_{t\ge0}) \big)&=& \lim_{x \rightarrow 0+} x^{-\Inv{6}} \Esp_{x,0}^{\partial}\big(F((X_t)_{t\ge 0})\big) \\
  &=& k^\Inv 2 \lim_{x \rightarrow 0+} (k^3 x)^{-\Inv{6}}
  \Esp_{k^3 x,0}^{\partial}\big(F((k^{-3}X_{k^2t})_{t\ge 0})\big) \qquad \hbox{ by (\ref{scaling})} \\
   &=& k^\Inv 2 \n\big(F((k^{-3}X_{k^2t})_{t\ge 0})\big).
\end{eqnarray*}
The result follows.
\end{proof}
 We give here two new expressions of the It\^o excursion measure of the reflected process. The first one is similar to the one
above, expressed as a limit. But it is a limit of laws of the process starting with a zero
position and a small speed, instead of a zero speed and a small position.

\begin{theorem} The following limit
$$\n'\big(F(X)\big)= \lim_{u \rightarrow 0+} u^{-\Inv{2}} \Esp_{0,u}^{\partial}\big(F(X)\big),$$
exists and defines uniquely a measure on $\E$ with $\n'({0})=0$, and
which support is included in $\E_0$.
 We have: $$\n'= \left(\frac 32\right)^\Inv 6 \Inv {\sqrt \pi} \Gamma \Big(\Inv 3\Big) \n.$$
\end{theorem}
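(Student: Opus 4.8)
The plan is to realise both $\n'$ and Bertoin's measure $\n$ as integrals of the conditioned bridge laws $Q_{0;v}$ against an explicit weight in $v$, and then to read off the constant by comparing the two weights. The qualitative assertions about $\n'$ (existence, being a measure carried by $\E_0$ with no atom at the zero path) will come for free from its representation, so that only the numerical constant will remain to be computed.

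First I would treat $\n'$. Inserting the disintegration (\ref{Q_u}), $\Prob_{0,u}^\partial=|u|^{-1}\int Q_{u;v}\,\varphi(u,v)\,dv$, together with $\varphi(u,v)=\frac{3}{2\pi}\frac{u^{3/2}v^{3/2}}{u^3+v^3}$, gives for admissible $F$ and $u>0$
$$ u^{-1/2}\Esp_{0,u}^\partial(F)=\frac{3}{2\pi}\int_0^\infty Q_{u;v}(F)\,\frac{v^{3/2}}{u^3+v^3}\,dv. $$
Letting $u\to0^+$, the joint continuity of $(u,v)\mapsto Q_{u;v}(F)$ established just after (\ref{Q_u,v}), together with dominated convergence — the weight is $\le v^{-3/2}$ at infinity, while the hypothesis that $F\equiv0$ near the zero path and the scaling of $Q_{0;v}$ make the small-$v$ part negligible — yield the existence of the limit and
$$ \n'(F)=\frac{3}{2\pi}\int_0^\infty Q_{0;v}(F)\,v^{-3/2}\,dv. $$
Since every $Q_{0;v}$ is carried by $\E_0$, this simultaneously proves that $\n'$ is a measure supported on $\E_0$ with $\n'(\{0\})=0$.

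Next I would obtain the parallel representation of $\n$. Conditioning $\Prob_{x,0}^\partial$ on $\{-W_{\zeta^-}=v\}$ via the same $\h$-transform computation that produced (\ref{Q_u,v}) gives a conditional law equal to the $\h_v$-transform of $\Prob_{x,0}^\partial$; as $x\to0^+$ the starting point $(x,0)$ tends to $(0,0)$ and, by uniqueness of the Langevin bridge, this conditional law converges to the same $Q_{0;v}$ as in the previous step. Writing $\Prob_{x,0}^\partial(-W_{\zeta^-}\in dv)=\h_v(x,0)\,dv$ and exchanging limit and integral (justified by domination as before),
$$ \n(F)=\lim_{x\to0^+}\int_0^\infty \Esp_{x,0}^\partial\big(F\mid -W_{\zeta^-}=v\big)\,x^{-1/6}\h_v(x,0)\,dv=\int_0^\infty Q_{0;v}(F)\,\Big(\lim_{x\to0^+}x^{-1/6}\h_v(x,0)\Big)\,dv. $$
The scaling identity $\h_v(x,0)=x^{-1/3}\h_{vx^{-1/3}}(1,0)$, a direct consequence of (\ref{scaling}), turns the inner limit into $C\,v^{-3/2}$ with $C:=\lim_{w\to\infty}w^{3/2}\h_w(1,0)$. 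Comparing with the formula for $\n'$ gives $\n'=\frac{3}{2\pi C}\,\n$, so the whole theorem is reduced to the evaluation of the single number $C$.

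Finally I would compute $C$ from the explicit expression (\ref{h_v}) for $\h_w(1,0)$. Using the Gaussian kernel one has $\Phi_0(1,0;V)=\frac{\sqrt3}{\pi}\int_0^\infty t^{-2}\exp(-6/t^3-6V/t^2-2V^2/t)\,dt$, whose leading behaviour for large $|V|$ is $\frac{\sqrt3}{2\pi}V^{-2}$. Substituting this into the two terms of (\ref{h_v}) one checks that the $w^{-2}$ contributions cancel exactly, the cancellation being forced by $\int_0^\infty\frac{\mu^{-1/2}}{\mu^3+1}\,d\mu=\frac{2\pi}{3}$. The surviving $w^{-5/2}$ term, which is what gives $C$, comes from the boundary layer $\mu\sim1/w$ in the integral of (\ref{h_v}), i.e. the region where the argument $\mu w$ of $\Phi_0$ stays of order one; after the substitution $\alpha=\mu w$ this leaves a convergent integral of $\Phi_0(1,0;\cdot)$ against a power of $\alpha$, and the remaining integrations of the type $\int_0^\infty t^{-b}e^{-6/t^3}\,dt$ evaluate to values of $\Gamma$ at thirds, producing the factors $\Gamma(1/3)$, $(3/2)^{1/6}$ and $\pi^{-1/2}$. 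I expect this last step to be the main obstacle: because the two leading terms cancel, $C$ is a genuinely subleading coefficient, so a naive term-by-term expansion of $\Phi_0$ diverges and a careful matched expansion of the potential is required to extract the fractional-order coefficient that carries the constant. The two disintegrations are the clean part of the argument; the delicate part is this asymptotic analysis.
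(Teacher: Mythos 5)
Your proposal has two genuine gaps, and the more serious one is the link between your representation of $\n'$ and the measure $\n$. You condition $\Prob_{x,0}^\partial$ on $\{-W_{\zeta^-}=v\}$ and assert that, as $x\to0+$, the resulting $\h_v$-transformed law converges ``by uniqueness of the Langevin bridge'' to the same $Q_{0;v}$ you obtained as $\lim_{u\to0}Q_{u;v}$. No such uniqueness statement is available in the paper or in the literature it cites: $(0,0)$ is a degenerate entrance point for the Kolmogorov process, and the claim that the conditioned laws started from $(x,0)$ and from $(0,u)$ converge to the \emph{same} entrance law is essentially equivalent to the theorem itself, whose whole content is that the two normalized families $x^{-\frac16}\Esp_{x,0}^\partial$ and $u^{-\frac12}\Esp_{0,u}^\partial$ have proportional limits. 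The paper avoids any bridge-convergence issue by a pathwise reduction: under $\Prob_{0,u}^\partial$ it waits until the velocity first hits $0$ at time $\tau_0$, controls $u^{-\frac12}\Esp_{0,u}^\partial\big(|F-F\circ\Theta_{\tau_0}|\big)$ via the Lipschitz property of $F$ together with the estimates $u^{-\frac12}\Prob_{0,u}^\partial(\tau_0\ge au)\le a^{-\frac12}\sqrt{2/\pi}$ and $u^{-\frac12}\Prob_{0,u}^\partial(h\ge b)\le u^{\frac12}/b+c_1\,\n(h\ge b)$, and then applies the strong Markov property at $\tau_0$ so that Proposition 2 is invoked only at the random point $(X_{\tau_0},0)$ with $X_{\tau_0}\to0$. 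Even the ``clean'' half of your argument has holes: the joint continuity of $Q_{u;v}$ is established only on the open quadrant $(\Real_+^*)^2$, so $\lim_{u\to0}Q_{u;v}(F)$ must be produced by combining $\widehat Q_{u;v}=Q_{v;u}$ with the second-index limit $Q_{v;u}\to Q_{v;0}$; moreover that limit is proved in the paper only for functionals of the form $\phi_t\un_{\zeta>t}$ with $\phi_t$ being $\mathcal F_t$-measurable, not for bounded continuous functionals of the entire excursion, so a tightness argument is missing. Finally, your dominated-convergence step near $v=0$ faces the non-integrable weight $v^{-\frac32}$ and needs a $u$-uniform quantitative bound on $Q_{u;v}(|F|)$ that you do not supply; asserting that ``scaling makes the small-$v$ part negligible'' is exactly the estimate that must be proved, and it is where the paper's tail bounds do real work.

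The second gap is the constant. You correctly reduce it, via the scaling identity $\h_v(x,0)=x^{-\frac13}\h_{vx^{-1/3}}(1,0)$, to $C=\lim_{w\to\infty}w^{\frac32}\h_w(1,0)$, but you then concede that the leading $w^{-2}$ terms of (\ref{h_v}) cancel, that a naive expansion diverges, and that a matched asymptotic analysis ``is required'' --- which you do not carry out. So the value $\big(\frac32\big)^{\frac16}\frac{1}{\sqrt\pi}\Gamma\big(\frac13\big)$ is never actually derived, and the large-$w$ asymptotics of $\h_w$ (as opposed to the small-$v$ asymptotics of Lemma 3) is nowhere established. The paper obtains the constant in closed form with no asymptotic analysis at all: $c_1=\Esp_{0,1}\big(X_{\tau_0}^{\frac16}\big)$, evaluated from Lefebvre's explicit density of $X_{\tau_0}$ by a short Gamma-integral computation. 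Note also that your target representation $\n'(F)=\frac{3}{2\pi}\int_0^\infty Q_{0;v}(F)\,v^{-\frac32}\,dv$ is, in itself, correct --- it is the paper's Proposition 4, once one writes $Q_{0;v}=\widehat Q_{v;0}$ --- but in the paper it is a \emph{consequence} of Theorem 2, derived after $\n'$ is known to exist; using it as the route to existence inverts the logical order and is precisely what creates the unproven steps above.
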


 This formula is useful because we have more explicit
densities for the law $\Prob_{0,u}$ than for the law $\Prob_{x,0}$
(cf (\ref{mcKean joint}) and (\ref{mcKean})). For example, we can
easily infer the following corollaries:
\begin{corollary}
 The joint density of $\zeta$ and $V_{\zeta^-}$ under $\n'$ is given by:
 $$\n'(\zeta \in ds, |V_{\zeta^-}| \in dv) =  6\sqrt{\frac{2 v^3}{\pi^3 s^5}}\exp\Big(-2\frac{v^2}{s}\Big) ds dv. $$
\end{corollary}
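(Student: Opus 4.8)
The plan is to read off the joint density directly from the defining limit for $\n'$ in Theorem~2, testing it against functionals that depend on the excursion only through the pair $(\zeta, V_{\zeta^-})$. Concretely, for a continuous bounded $g$ on $(0,\infty)^2$ supported away from the axes $\{s=0\}$ and $\{v=0\}$, the functional $F(X):=g(\zeta,|V_{\zeta^-}|)$ vanishes on a neighborhood of the zero path (where $|V_{\zeta^-}|$ is small), so it is admissible and
$$\n'\big(g(\zeta,|V_{\zeta^-}|)\big)=\lim_{u\to 0+}u^{-\Inv 2}\,\Esp_{0,u}^{\partial}\big(g(\zeta,-W_{\zeta^-})\big),$$
where I have used that for $u>0$ the excursion under $\Prob_{0,u}^{\partial}$ leaves $0$ upwards and returns with velocity $W_{\zeta^-}=-|V_{\zeta^-}|<0$. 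Thus everything reduces to understanding the scaled measures $u^{-\Inv 2}\,\Prob_{0,u}^{\partial}(\zeta\in ds,-W_{\zeta^-}\in dv)$ as $u\to 0+$, for which I have the explicit expression supplied by McKean's formula~(\ref{mcKean joint}), valid with $\Prob_{0,u}$ replaced by $\Prob_{0,u}^{\partial}$.

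First I would isolate the only $u$-dependent factors in~(\ref{mcKean joint}): the Gaussian factor $\exp\!\big(-2\frac{v^2-uv+u^2}{s}\big)$, which tends to $\exp(-2v^2/s)$, and the incomplete integral $\int_0^{4uv/s}{\rm e}^{-3\theta/2}\,d\theta/\sqrt{\pi\theta}$. The point is the small-argument asymptotics of the latter: as $u\to 0+$ the upper limit shrinks, ${\rm e}^{-3\theta/2}\to 1$ there, and
$$\int_0^{4uv/s}{\rm e}^{-3\theta/2}\frac{d\theta}{\sqrt{\pi\theta}}\;\sim\;\frac{1}{\sqrt\pi}\int_0^{4uv/s}\theta^{-\Inv 2}\,d\theta=\frac{4\sqrt{uv}}{\sqrt{\pi s}}.$$
Multiplying by $u^{-\Inv 2}$ cancels the $\sqrt u$ exactly and leaves $4\sqrt v/\sqrt{\pi s}$. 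Combining with the prefactor $3v/(\pi\sqrt 2\,s^2)$ and the limiting Gaussian gives the pointwise limit $\frac{6\sqrt 2\,v^{3/2}}{\pi^{3/2}s^{5/2}}\exp(-2v^2/s)=6\sqrt{2v^3/(\pi^3 s^5)}\exp(-2v^2/s)$, which is exactly the claimed density; the arithmetic $12/\sqrt 2=6\sqrt 2$ is the only bookkeeping involved.

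The step requiring care is the passage from this \emph{pointwise} convergence of densities to convergence of the integrals against $g$, i.e.\ the interchange of limit and integral. I would justify it by dominated convergence. On the support of $g$ the variables $s,v$ range over a compact subset of $(0,\infty)^2$, and there the integrand $u^{-\Inv 2}$ times the density in~(\ref{mcKean joint}) is bounded uniformly in $u\in(0,1]$: using ${\rm e}^{-3\theta/2}\le 1$ one gets $u^{-\Inv 2}\int_0^{4uv/s}{\rm e}^{-3\theta/2}\,d\theta/\sqrt{\pi\theta}\le 4\sqrt v/\sqrt{\pi s}$, while the Gaussian factor is at most $\exp(2uv/s)\exp(-2v^2/s)$, both bounded on the support. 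Since test functions of this form determine the joint law of $(\zeta,|V_{\zeta^-}|)$ on $(0,\infty)^2$, the density is identified as claimed. The only genuine subtlety is to confirm that $(\zeta,|V_{\zeta^-}|)$ is a continuous functional on $\E$, so that $F=g(\zeta,|V_{\zeta^-}|)$ belongs to the admissible class of Theorem~2; this requires only mild care once $g$ is supported away from the axes.
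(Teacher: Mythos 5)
Your proposal is correct and follows essentially the same route as the paper: apply the limit formula of Theorem~2 to functionals of $(\zeta,|V_{\zeta^-}|)$, substitute McKean's joint density (\ref{mcKean joint}), and extract the $u\to 0+$ asymptotics, where the incomplete integral contributes $4\sqrt{uv}/\sqrt{\pi s}$ and the factor $u^{-\Inv 2}$ cancels the $\sqrt u$, yielding $6\sqrt{2v^3/(\pi^3 s^5)}\exp(-2v^2/s)$. You are in fact somewhat more careful than the paper, which simply invokes $\n'$-a.s.\ nonvanishing of $|V_{\zeta^-}|$ and continuity of $X\mapsto(\zeta,|V_{\zeta^-}|)$ on $\{|V_{\zeta^-}|\neq 0\}$, whereas you make the domination argument for the limit--integral interchange explicit.
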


\noindent {\bf Remark.} Taking the second marginal of this density,
this gives the $\n'$-density of $-V_{\zeta^-}$,
$$ \n'(|V_{\zeta^-}| \in dv)= \frac {45} {8 \pi} v^{-\frac 3 2} dv.$$
This improves Corollary 2 (ii) in \cite{reflecting}.

\begin{proof}
It is easy to check, for example from the corresponding property for
the free Langevin process, that $|V_{\zeta^-}| \neq 0$ $\n'$-almost
surely. But $X \rightarrow (\zeta(X), |V_{\zeta^-}|)$ is continuous
on $|V_{\zeta^-}|\neq 0$ thus we can use the limit formula to get
the density:

$$\n'(\zeta \in ds, |V_{\zeta^-}| \in dv) = \lim_{u\rightarrow 0} u^{-\Inv{2}}
\Prob_{0,u} ^\partial (\zeta \in ds, |V_{\zeta^-}| \in dv). $$

Now, using (\ref{mcKean joint}), we can calculate:
\begin{eqnarray*}
&& \frac {u^{-\Inv{2}}} {ds dv}
\Prob_{0,u} ^\partial (\zeta \in ds, |V_{\zeta^-}| \in dv) \\
&=& u^{-\Inv{2}} \frac{3v}{\pi \sqrt 2 s^2}
\exp\Big(-2\frac{u^2-vu+v^2}{s}\Big) \int_0^\frac{4 uv}{s} {\rm e}^{-\frac{3\theta}{2}} \frac{d \theta}{\sqrt{\pi\theta}} \\
&\sim& \frac{3v u^{-\Inv{2}} }{\pi \sqrt 2 s^2} \exp\Big(-2\frac{v^2}{s}\Big) \int_0^\frac{4 uv}{s} \frac{d \theta}{\sqrt{\pi\theta}}\\
&\sim& \frac{6 \sqrt 2}{\pi^\frac 3 2} \sqrt \frac {v^3}{s^5} \exp\Big(-2\frac{v^2}{s}\Big),
\end{eqnarray*}
so that we have, as stated:
 $$\n'(\zeta \in ds, |V_{\zeta^-}| \in dv) = c \sqrt \frac {v^3}{s^5} \exp\Big(-2\frac{v^2}{s}\Big) ds dv.$$
\end{proof}

\begin{corollary}
The measure $\overline \h_0(x,-u) dx du$, $x\ge 0, u\in \Real$, is
invariant for the reflected Kolmogorov process.
\end{corollary}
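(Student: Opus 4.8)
The plan is to identify the stated measure as the expected occupation measure of the It\^o excursion measure of the reflected process, and then to compute that occupation measure explicitly. Recall the general principle of excursion theory: for a recurrent strong Markov process possessing a regular regenerative boundary, the measure $\mu(\bullet):=\n\big(\int_0^\zeta \un_{(X_s,V_s)\in\bullet}\,ds\big)$, the expected occupation measure under the It\^o excursion measure $\n$, is invariant. Recurrence and the regenerative structure of the reflected process at the boundary are provided by Bertoin's construction, and the boundary itself carries no $\mu$-mass, in accordance with $\int_0^\infty\un_{\{X_t=0\}}\,dt=0$. Since invariance is unaffected by a positive scalar and $\n'= \left(\frac 32\right)^\Inv 6 \Inv {\sqrt \pi} \Gamma \left(\Inv 3\right) \n$, it suffices to show that the occupation measure computed with $\n'$ equals $\overline{\h_0}(x,-u)\,dx\,du$.

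To compute it I would start from the Green measure of a single killed excursion. Writing $U_{0,v}^\partial(dx\,du):=\Esp_{0,v}^\partial\big(\int_0^\zeta\un_{(Y_s,W_s)\in dx\,du}\,ds\big)$ and integrating the relation (\ref{vPv->P(zeta)}) over the lifetime $t\in(0,\infty)$, one obtains, for $v>0$,
$$U_{0,v}^\partial(dx\,du)\,|v|\,dv=\Prob_{x,-u}(-W_{\zeta^-}\in dv)\,dx\,du=\h_v(x,-u)\,dv\,dx\,du,$$
the last equality being the definition of $\h_v$ in Lemma \ref{lemme_technique} (for $x>0$ the point $(x,-u)$ always lies in $D$, and $\zeta<\infty$ a.s.). Hence $U_{0,v}^\partial(dx\,du)=v^{-1}\h_v(x,-u)\,dx\,du$. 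Normalising by $v^{-\Inv2}$, letting $v\to0+$, and invoking the equivalence (\ref{equiv}), namely $\h_v(x,-u)\sim\overline{\h_0}(x,-u)\,v^{\frac32}$, yields $v^{-\Inv2}U_{0,v}^\partial(dx\,du)\to\overline{\h_0}(x,-u)\,dx\,du$. Feeding this into the defining limit $\n'(\,\cdot\,)=\lim_{v\to0+}v^{-\Inv2}\Esp_{0,v}^\partial(\,\cdot\,)$ produces $\mu(dx\,du)=\overline{\h_0}(x,-u)\,dx\,du$, as claimed. The same identity, integrated instead against $|v|\,dv$, returns the Lebesgue potential (\ref{Q_ex^infty}) of $Q_{ex}$, which is a reassuring consistency check.

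The main obstacle is twofold, and both difficulties are analytic rather than conceptual. First, one must legitimately apply the limit defining $\n'$ to the occupation functional $X\mapsto\int_0^\zeta g(X_s,V_s)\,ds$, which is neither bounded nor supported away from the null path $X\equiv0$; I would handle this by monotone approximation, reducing to $g$ supported in $\{x\ge\delta\}$, for which the functional is continuous on $\{|V_{\zeta^-}|\ne0\}$ and the interchange of $\lim_{v\to0+}$ with integration is justified by the continuity of $(v,x,u)\mapsto\h_v(x,u)$ on $E$ and the uniform control near $v=0$ furnished by (\ref{equiv}). Second, one must invoke the excursion-theoretic identification of $\mu$ as an invariant measure; this is where the regenerative structure of Bertoin's process enters, via the compensation formula for the Poisson point process of excursions.

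As a final consistency check I would note the infinitesimal counterpart. The free Kolmogorov generator is $\mathcal Lf=u\,\partial_xf+\tfrac12\partial_u^2f$, and the duality (\ref{duality}) is precisely the statement $\mathcal L^{*}=C\mathcal L C$ with $Cf(x,u)=f(x,-u)$. Since $\Phi_0(\cdot\,;\alpha)$ is the Green function of the free process it is $\mathcal L$-harmonic off $\{x=0\}$, hence so is $\overline{\h_0}$ by differentiation and integration in $\alpha$; applying $C$ shows that $\rho(x,u):=\overline{\h_0}(x,-u)$ solves the stationary equation $\mathcal L^{*}\rho=0$ in the open half-space $\{x>0\}$, which is exactly the interior part of the invariance. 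The delicate behaviour at $\{x=0\}$ is what the inelastic reflection encodes, and it is this point that the excursion computation above settles rigorously.
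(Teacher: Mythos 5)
Your proposal is correct and takes essentially the same route as the paper: invoke the general excursion-theoretic fact that the occupation measure under the It\^o excursion measure is invariant, then compute that measure by combining the limit formula defining $\n'$ with the relation (\ref{vPv->P(zeta)}) integrated over the lifetime and the equivalence (\ref{equiv}) of Lemma \ref{lemme_technique}, yielding $\overline{\h_0}(x,-u)\,dx\,du$. Your explicit justification for applying the defining limit of $\n'$ to the unbounded occupation functional is a point the paper passes over in silence, and is a welcome refinement rather than a divergence of method.
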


\begin{proof}
It is well-known that the occupation measure under the It\^o's excursion measure
$$\mu(dx,du)= \n'\left(\int_{[0,\zeta]} \un_{Z_t \in (dx,du)} dt \right),$$
is an invariant measure for the underlying Markov process (cf Theorem 8.1 in \cite{Getoor})

This enables us to calculate:
\begin{eqnarray*}
 \mu(dx,du) &=& \lim_{v \to 0} v^{-\Inv 2} \Esp_{0,v}^\partial \left(\int_{[0,\zeta]} \un_{Z_t \in (dx,du)} dt \right). \\
   &=& \lim_{v \to 0} v^{-\Inv 2} \int_{\Real_+} \Prob_{0,v}^\partial \big(Z_t \in (dx,du)\big) dt \\
   &=& dx du \lim_{v \to 0} v^{-\frac 3 2} \frac {\Prob_{x,-u} (-V_{\zeta^-} \in dv)} {dv} \qquad \hbox{ by (\ref{vPv->P(zeta)})} \\
   &=& \overline \h_0(x,-u) dx du  \qquad \qquad  \qquad \qquad \qquad \hbox{ by Lemma \ref{lemme_technique}}.
\end{eqnarray*}

\end{proof}
\begin{proof}[Proof of Theorem 2]

In order to prove $\n'= c_1 \n$, it is enough to prove that
$\n'(F(X)) = c_1 \n(F(X))$, for $F$ a Lipschitz bounded functional.
 The idea of this proof will be to compare the quantities
  $$u^{-\Inv{2}} \Esp_{0,u}^\partial \big(F(X)\big) \ \ \ \hbox{      and      }\ \ \ u^{-\Inv{2}} \Esp_{0,u}^\partial \big(F \circ \Theta_{\tau_0} (X) \big),$$
 where $\Theta$ is the usual translation operator, defined by $$\Theta_t((X_s)_{s\geq 0}) := (X_{t+s})_{s\geq 0},$$
 and $\tau_x$ is the hitting time of $x$ for the velocity process.

First we will control the difference, cutting the space on two
events, the event that $\tau_0$ is ``small", on which we will use
that $F$ is Lipschitz, and the event that $\tau_0$ is ``big", that
has a small probability. Next we will use a Markov property to see
that the quantity $u^{-\Inv{2}} \Esp_{0,u}^\partial \big(F \circ
\Theta_{\tau_0} (X) \big)$ can be compared to $\n(F)$.

As a preliminary we prove some estimates:

 $\bullet$ We write $\mathbf{P}_u$ for the law of the Brownian motion
started from $u$. We write $\tau_x$ for both the hitting time of $x$
for the velocity process under $\Prob_{0,u}^\partial$, and the
hitting time of $x$ for the Brownian motion under $\mathbf{P}_u$.
Let $a$ be a constant. A simple calculation based on the scaling
property of the Brownian motion and on the reflection principle
gives:
 \begin{eqnarray*}
 u^{-\Inv{2}} \Prob_{0,u}^\partial (\tau_0 \ge a u ) &=&  u^{-\Inv{2}} \mathbf{P}_u(\tau_0 \ge a u) \\
 &=&  u^{-\Inv{2}} \mathbf{P}_{0} (\tau_{a^{-\Inv 2} u^\Inv 2} \ge 1) \\
 &=& u^{-\Inv{2}} \mathbf{P} \big(\mathcal{N}(0,1) \in [-a^{-\Inv 2}u^\Inv 2,a^{-\Inv 2}u^\Inv 2]\big)\\
 &\leq& a^{-\Inv2} \sqrt{\frac 2 \pi} ,
\end{eqnarray*}
where $\mathcal{N}(0,1)$ is a Gaussian variable with mean zero and
variance $1$.

$\bullet$ Let us write $h$ for the supremum of the absolute value of
the velocity process. Let $b$ be a constant. We have:
\begin{eqnarray*}
  u^{-\Inv 2} \Prob_{0,u}^\partial ( h \ge b ) &\leq& u^{-\Inv 2} \Prob_{0,u}^\partial( \tau_b< \tau_0 ) +u^{-\Inv 2} \Prob_{0,u}^\partial ( h \circ \Theta_{\tau_0} \ge b) \\
   &\leq&  u^{-\Inv 2} \Prob_{0,u}( \tau_b< \tau_0 ) + u^{-\Inv 2} \int_{\Real_+} \Prob_{0,u} (Y_{\tau_0} \in dx) \Prob_{x,0}^\partial (h \ge b) \\
&\leq&  \frac {u^{\Inv 2}} {b}+  u^{-\Inv{2}} \int \Prob_{0,u}
(Y_{\tau_0} \in dx) x^\Inv{6} f(x),
\end{eqnarray*}
where the function $f : x \rightarrow x^{-\Inv{6}}
\Prob_{x,0}^\partial (h \ge b) $ is bounded and has limit $f
(0)=\n(h\ge b)$ at zero, thanks to Proposition 2. In the sum, the
second term is thus equal to:

\begin{eqnarray*}
u^{-\Inv{2}} \Esp_{0,u}\big( X_{\tau_0} ^\Inv{6} f(X_{\tau_0}) \big)
&=& u^{-\Inv{2}} \Esp_{0,1}\big( (u^3 X_{\tau_0}) ^\Inv{6} f(u^3 X_{\tau_0}) \big)\\
&=&  \Esp_{0,1}\big( X_{\tau_0} ^\Inv{6} f(u^3 X_{\tau_0}) \big)\\
& \rightarrow_{u\rightarrow 0}& \Esp_{0,1}\big( X_{\tau_0}
^\Inv{6}\big) f(0),
\end{eqnarray*}
where in the second line we used the usual scaling property for the Langevin process.

We write $c_1=\Esp_{0,1}\big( X_{\tau_0} ^\Inv{6}\big)$, so that we
have the bound:

 \begin{equation*}
     u^{-\Inv 2} \Prob_{0,u}^\partial ( h \ge b ) \leq \frac {u^{\Inv 2}} {b}+ c_1 \n(h \ge b).
 \end{equation*}

  We would like to prove that $c_1$ is finite.  We can actually calculate it explicitly.
Indeed, thanks to Lefebvre \cite{Lefebvre} we know that the density
of the variable $X_{\tau_0}$ under $\Prob_{0,1}$ is given by:
 $$\Prob_{0,1}(X_{\tau_0} \in d\xi) = \frac {\Gamma(\frac 2 3)} {3^\Inv
 6 2^{\frac 2 3} \pi} \xi^{-\frac 4 3} {\rm e} ^{-\frac 2 {9\xi}} d \xi,$$
so that we can calculate:
\begin{eqnarray*}
c_1 &=& \int_{\Real_+} \xi^\Inv 6
\Prob_{0,1}(X_{\tau_0} \in d\xi) \\
 &=& \frac {\Gamma(\frac 2 3)} {2^{\frac 2 3} 3^\Inv 6 \pi} \int_{\Real_+}
 \xi^{-\frac 7 6} {\rm e}^{-\frac 2 {9\xi}} d \xi \\
&=& \frac {\Gamma(\frac 2 3)} {2 \pi 3^\Inv 6} \int_{\Real_+} \left(\frac
9 2\right)^{\Inv 6} x^{-\frac 5 6} {\rm e}^{-x} dx \\
&=& \frac {3^\Inv 6} {2^\frac 5 6 \pi }  \Gamma\Big(\frac 2 3\Big) \Gamma\Big(\Inv 6\Big)\\
&=& \left(\frac 3 2\right)^\Inv 6 \Inv {\sqrt \pi} {\Gamma\Big(\frac 1 3\Big)},
\end{eqnarray*}
Let us notice that this is the constant that appears in the theorem.

$\bullet$ We are ready to tackle the proof of this theorem. We write $l$ for the Lipschitz constant of $F$.
We have:
\begin{eqnarray*}
u^{-\Inv{2}} \Esp_{0,u}^\partial \big( |F(e) - F \circ
\Theta_{\tau_0} (e)|\un_{\tau_0 < a u, h < b}\big)
&\leq& u^{-\Inv 2} l(a u)b \\
&\leq& a b u^\Inv 2 l,
\end{eqnarray*}
and

\begin{eqnarray*}
&& u^{-\Inv{2}} \Esp_{0,u}^\partial \big( |F(X) - F \circ
\Theta_{\tau_0} (X)| \un_{\tau_0 \ge a u \hbox{ or } h \ge b}\big) \\
&\leq& \big(2 \sup(F)\big) \left(\sqrt{\frac 2 \pi} a^{-\Inv 2} +
\frac {u^\Inv 2} b + c_1 \n(h \ge b)\right),
\end{eqnarray*}
thus we deduce
\begin{eqnarray*}
\qquad \: \limsup_{u\rightarrow 0} u^{-\Inv{2}} \Esp_{0,u}^\partial
\big( |F(X) - F \circ \Theta_{\tau_0} (X)|\big)
  &\leq& \big(2 \sup(F)\big) \left(\sqrt{\frac 2 \pi} a^{-\Inv 2} + c_1 \n(h \ge b)\right).
\end{eqnarray*}
The $\limsup$ is bounded by this expression, $a$ and $b$ being any
positive constant. Letting $a$ and $b$ go to infinity shows that
$$ \lim_{u\rightarrow 0} u^{-\Inv{2}} \Esp_{0,u}^\partial \big( |F(X) - F \circ \Theta_{\tau_0} (X)|\big) = 0.$$

$\bullet$ Next, we just need to prove that $u^{-\Inv{2}}
\Esp_{0,u}^\partial (F \circ \Theta_{\tau_0}(X))$ has a limit when
$u$ goes to zero, and that this limit is $c_1 \n(F(X))$, in order to
get that $\n'$ is well-defined and equal to $c_1 \n$.

 The calculation is similar to the one above, that we did with
$\un_{h \ge b}$ instead of $F$. Here again, the Markov property gives
us:
\begin{eqnarray*}
u^{-\Inv{2}} \Esp_{0,u}^\partial (F \circ \Theta_\tau(X)) &=&
u^{-\Inv{2}} \int \Prob_{0,u} (X_\tau \in dx) x^\Inv{6} f_F(x),
\end{eqnarray*}
where the function $f_F : x \rightarrow x^{-\Inv{6}}
\Esp_{x,0}^\partial (F(X)) $ is bounded and has limit $f_F
(0)=\n(F(X))$ at zero. We thus have:

\begin{eqnarray*}
u^{-\Inv{2}} \Esp_{0,u}^\partial (F \circ \Theta_\tau(X)) &
\rightarrow_{u\rightarrow 0}& \Esp_{0,1} \big( Y_\tau ^\Inv{6}\big)
f_F(0)= c_1 \n(F(X)),
\end{eqnarray*}
and the theorem is proved.
\end{proof}

 The second new expression we get is different, this time the measure
is given as a mixture and not as a limit. Recall that the probability measure $Q_{u;0}$ has been defined in (\ref{Q_u,0}).

 \begin{proposition} The measure $\n'$ is also given by the expression:
\begin{eqnarray}
\n'\big(F(X)\big) = \frac 3 {2\pi} \int_{\Real_+} u^{-\frac 3 2}
Q_{u;0}\big(F(\hat{X})\big)du,
\end{eqnarray}
where $(\hat{X}_t)_{0\leq t\leq \zeta}$ is defined by
$\hat{X}_t=X_{\zeta - t}$.
 \end{proposition}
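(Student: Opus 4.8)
The plan is to start from the limit definition of $\n'$ given in Theorem 2, insert the disintegration (\ref{Q_u}) of $\Prob_{0,u}^\partial$ over the conditioned measures $Q_{u;v}$, and then pass to the limit inside the resulting integral. It suffices to prove the identity for $F$ continuous, bounded and identically $0$ on a neighborhood of the path $X\equiv 0$, since such functionals determine $\n'$. For such $F$, Theorem 2 gives $\n'(F(X))=\lim_{w\to 0+} w^{-\Inv{2}}\Esp_{0,w}^\partial(F(X))$. For almost every $w>0$, formula (\ref{Q_u}) together with the explicit density $\varphi(w,v)=\frac{3}{2\pi}\frac{w^{\frac 32}v^{\frac 32}}{w^3+v^3}$ yields, after the simplification $w^{-\Inv{2}}w^{-1}\varphi(w,v)=\frac{3}{2\pi}\frac{v^{\frac 32}}{w^3+v^3}$,
$$w^{-\Inv{2}}\Esp_{0,w}^\partial\big(F(X)\big)=\frac{3}{2\pi}\int_0^\infty Q_{w;v}\big(F(X)\big)\,\frac{v^{\frac 32}}{w^3+v^3}\,dv.$$
Restricting $w$ to the full-measure set where this holds does not affect the limit.

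I would next identify the pointwise limit of the integrand as $w\to 0$. For fixed $v>0$ one has $v^{\frac 32}/(w^3+v^3)\to v^{-\frac 32}$. For the measures, set $Q_{0;v}:=\widehat Q_{v;0}$; the joint continuity of $(u,v)\mapsto Q_{u;v}$ on $(\Real_+^*)^2$, the weak convergence $Q_{v;u}\to Q_{v;0}$ as the second speed tends to $0$ (formula (\ref{Q_u,0})), and the time-reversal identity (\ref{uv-vu}), $\widehat Q_{u;v}=Q_{v;u}$, together give $Q_{w;v}=\widehat Q_{v;w}\to\widehat Q_{v;0}=Q_{0;v}$ weakly. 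Since $F(X)$ and $F(\widehat X)$ are continuous and bounded, $Q_{w;v}(F(X))\to Q_{0;v}(F(X))$ for each $v>0$.

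The crux is to interchange the limit and the integral. On any region $\{v\ge\eta\}$ this is routine: there $v^{\frac 32}/(w^3+v^3)\le v^{-\frac 32}$, $F$ is bounded, and $\int_\eta^\infty v^{-\frac 32}\,dv<\infty$, so dominated convergence applies. The delicate region is $v\to 0$, where $v^{-\frac 32}$ fails to be integrable; this is the main obstacle. Here I would use that $F$ vanishes near $X\equiv 0$, so that $|Q_{w;v}(F(X))|\le\|F\|_\infty\,Q_{w;v}(\|X\|_\infty\ge\eps\ \hbox{or}\ \|V\|_\infty\ge\eps)$ for a suitable $\eps>0$, and then invoke the scaling property (\ref{Q_ku,kv}) --- under which $X$ scales like $k^{-3}$ and $V$ like $k^{-1}$ --- to rescale the speeds to order one. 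This shows that for $v$ small the excursion under $Q_{w;v}$ is too small to charge $F$, with enough quantitative decay that $\int_0^\eta Q_{w;v}(F(X))\,v^{\frac 32}/(w^3+v^3)\,dv\to 0$ as $\eta\to 0$ uniformly for small $w$. This uniform control is consistent with the marginal $\n'(|V_{\zeta^-}|\in dv)=\frac{3}{2\pi}v^{-\frac 32}\,dv$, obtained by integrating Corollary 3 over $\zeta$, which forces $Q_{0;v}(F(X))\to 0$ fast enough for the limiting integral to converge.

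Granting the interchange, one obtains $\n'(F(X))=\frac{3}{2\pi}\int_0^\infty Q_{0;v}(F(X))\,v^{-\frac 32}\,dv$. It remains to rewrite the integrand by time-reversal: by (\ref{uv-vu}), $Q_{0;v}(F(X))=\widehat Q_{v;0}(F(X))=Q_{v;0}(F(\widehat X))$, where $\widehat X_t=X_{\zeta-t}$. Substituting and renaming $v$ as $u$ gives exactly $\n'(F(X))=\frac{3}{2\pi}\int_{\Real_+} u^{-\frac 32}\,Q_{u;0}(F(\widehat X))\,du$, which is the asserted formula.
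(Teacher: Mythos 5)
Your proposal follows essentially the same route as the paper's proof: start from the limit formula of Theorem 2, disintegrate $\Prob_{0,u}^\partial$ via (\ref{Q_u}) with the explicit density $\varphi$, apply the time-reversal identity (\ref{uv-vu}) to replace $Q_{u;v}(F(X))$ by $Q_{v;u}(F(\hat X))$, and pass to the limit under the integral using the weak convergence $Q_{v;u}\to Q_{v;0}$ from (\ref{Q_u,0}). The only difference is that the paper performs the limit--integral interchange silently, whereas you correctly single it out as the delicate point and sketch a domination argument (splitting at $v=\eta$, then using the scaling property and the vanishing of $F$ near the zero path for small $v$), which supplies rigor precisely where the paper glosses over it.
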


 The price to pay is that we need to consider the time-reversed
excursions and to use the laws $Q_{u;0}$ instead of
$\Prob_{0,u}^\partial$. That is, the probability transitions of the
excursions are no more the ones of the Langevin process, killed at
zero, they become the $\overline \h_0$-transforms of these, as
written in (\ref{Q_u,0}).

\begin{proof}
This proposition is a consequence of the material developed in Section 4.2. Indeed, we have:

\begin{eqnarray*}
\n'(F(X))&=&\lim_{u \rightarrow 0} u^{-\Inv 2} \Esp_{0,u}^\partial (F(X)) \\
&=& \lim_{u \rightarrow 0} u^{-\frac 3 2} \int_{\Real_+} Q_{u;v}(F(X)) \varphi(u,v) dv \qquad \hbox{from (\ref{Q_u})} \\
&=& \lim_{u \rightarrow 0} \int_{\Real_+} \frac 3 {2 \pi} \frac {v^\frac 3 2}{u^3 + v^3} Q_{v;u}(F(\hat{X})) dv\\
&=& \int_{\Real_+} \frac 3 {2 \pi} v^{-\frac 3 2}
Q_{v;0}(F(\hat{X})) dv,
\end{eqnarray*}
where in the third line, we wrote the expression of $\varphi$ and used (\ref{uv-vu}).
\end{proof}

\section{Appendix}

\begin{proof}[Proof of Lemma 3]
 The first part of the lemma is just a
summary of known results, the case $x=0$ is nothing else that the
formula (\ref{mcKean}) written for the killed process (as mentioned
just after the formula), while the general case is given by
Gor$'$kov in \cite{Gorkov} and Lachal in \cite{Lachal91}. In this
article Lachal also underlines that taking $x=0$ in (\ref{h_v}) does
yield (\ref{h_v0}).

 For the second part we first prove that $\Phi_0$ and $\h$ are
well-defined and continuous \footnote{Note that
$\Phi(x,u;y,v)=\Phi_0(x-y,u;v).$}. For this we just give rough
bounds and use the theorem of dominated convergence and the theorem
of derivation under the integral. The main technical difficulty
stems from the number of variables.

  We have $$p_t(x,u;0,v) = \frac{\sqrt 3}{\pi t^2} \exp \big(-R(x,u,v,t)\big),$$
where $R(x,u,v,t)$ is the quotient:
\begin{eqnarray*}
R(x,u,v,t) &=& \frac{6}{t^3} (x+tu)^2 + \frac{6}{t^2} (x+tu)(v-u)+\frac{2}{t} (v-u)^2 \\
 &=& \Inv{t^3} \Big[\Inv{2} (3x+tu+2tv)^2 + \frac{3}{2} (x+tu)^2  \Big].
\end{eqnarray*}
The quotient $R$ is nonnegative.

 Let $(x_0,u_0,v_0)$ be in $D \times \Real$. We search for a
neighborhood of $(x_0,u_0,v_0)$ (in $D \times \Real$) on which the
integrand is bounded by an integrable function (of $t$). This will
prove that $\Phi_0$ is well-defined on this neighborhood and
continuous at $(x_0,u_0,v_0)$. We distinguish two cases:

 1) $x_0 \neq 0$: Then $R(x,u,v,t)$ is equivalent to $\frac{6
x_0^2}{t^3}$ in the neighborhood of $(x_0,u_0,v_0,0)$, thus it is
bounded below by $\frac{5 x_0^2}{t^3}$ on a $V \times ]0,\eps]$,
where $V$ is a neighborhood of $(x_0,u_0,v_0)$ and $\eps$ a strictly
positive number.

 On $V$, $p_t(x,u;0,v)$ is bounded above by the function
 $$\un_{]0,\eps]} (t) \frac{\sqrt 3}{\pi t^2} \exp\Big(-\frac{5 x^2}{t^3}\Big) + \un_{]\eps,\infty[} (t) \frac{\sqrt 3}{\pi t^2},$$
 which is integrable.

 2) $x_0=0$: Then $u_0>0$. On a neighborhood $V$ of $(0,u_0,v_0)$ we
have $u> \frac{2 u_0}{3}$, thus we have
$$R(x,u,v,t) \geq \frac{3}{2t^3}(x+tu)^2 \geq \frac{u_0^2}{t},$$
and thus the function $p_t(x,u;0,v)$ is bounded above by
$$\frac{\sqrt 3}{\pi t^2} \exp\big(-\frac{u_0}{t}\big),$$ which is
integrable.

 We thus proved the continuity of $\Phi_0$. A similar method proves
that $\Phi_0$ is infinitely differentiable. To get a continuity
result on $\h$, we will need some bounds for $\Phi_0(x,u,v)$, but
only for $v>0$.

For $v>0$, we have $R(x,u,v,t) \geq \frac{3 v^2}{2t}$, thus we have:

\begin{eqnarray*}
\Phi_0(x,u,v) &\leq& \int_0^\infty \frac{\sqrt 3}{\pi t^2} \exp\Big(- \frac{3 v^2}{2t}\Big) dt \\
&\leq& \frac{2 \sqrt 3}{3 \pi} v ^{-2}.
\end{eqnarray*}
If $(x_0,u_0,v_0)$ is a given point in $E=\Real_+^*\times D$, then in the neighborhood of this point we have $v>\frac{v_0} {2}$
and we deduce:
\begin{eqnarray*}
\frac{\mu^\frac{3}{2}}{\mu ^3 + 1} \Phi_0(x,u, \mu v) \leq \frac{8 \sqrt 3}{3 \pi} \frac{\mu^{-\frac{1}{2}} v^{-2}}{\mu ^3 + 1},
\end{eqnarray*}
which, considered as a function of $\mu$, is integrable on $\Real_+$.

 The function $\h$ is thus well-defined and continuous.

We now study the behavior of $\h$ when $v$ is small.

\begin{eqnarray*}
\Inv{v} \h_v(x,u) &=& \Phi_0(x,u,v) - \frac{3}{2 \pi} \int_0^\infty \frac{\mu^\frac{3}{2}}{\mu ^3 + 1} \Phi_0(x,u, \mu v) d\mu \\
&=& \Big[\Phi_0(x,u,0) - \frac{3}{2 \pi} \int_0^\infty \frac{\mu^\frac{3}{2}}{\mu ^3 + 1} \Phi_0(x,u, \mu v) d\mu \Big] + O(v) \\
&=& \Esp \big[\Phi_0(x,u,0) - \Phi_0(x,u,v \xi) \big] + O(v),
\end{eqnarray*}
where $\xi$ is a random variable with law the probability measure
$\displaystyle \frac{3}{2 \pi} \frac{\mu^\frac{3}{2}}{\mu ^3 + 1}
d\mu$. We next observe that:

\begin{eqnarray*}
\Esp \big[\Phi_0(x,u,0) - \Phi_0(x,u,v \xi) \big] &=& -\int_{\Real_+} \Prob(v \xi \geq \mu) \frac{\partial \Phi_0}{\partial v} (x,u,\mu) d \mu \\
&=& v^\Inv{2} \int_{\Real_+} f_v(\mu) d\mu,
\end{eqnarray*}
where we have written $f_v(\mu) = -v^{-\Inv{2}} \Prob (\xi \geq \mu
v^{-1}) \frac{\partial \Phi_0}{\partial v} (x,u,\mu)$.

 But the probability  $\Prob (\xi \geq a)$ is equivalent to $\frac 3
\pi a^{-\Inv 2}$ when $a$ goes to infinity, and bounded by the same
$\frac 3 \pi a^{-\Inv 2}$ for any $a$. On the one hand we deduce
that the continuous functions $f_v$ converge weakly to the function
$\displaystyle f_0: \mu \rightarrow -\frac 3 \pi \mu^{-\Inv 2}
\frac{\partial \Phi_0}{\partial v} (x,u,\mu)$ when $v$ goes to zero,
on the other hand that $|f_v| \leq |f_0|$. We just need to prove
that $f_0$ is integrable with respect to the Lebesgue measure. We
have:

\begin{eqnarray*}
-\frac{\partial \Phi_0}{\partial v} (x,u,v) &=& -\int_{\Real_+} \frac{\partial p_t}{\partial v} (x,u;0,v) dt \\
&=& \int_{\Real_+} \Big(\frac{6x}{t^2} + \frac{2u} t + \frac {4v} t \Big) p_t(x,u;0,v) dt \\
&=& \int_{\Real_+} \frac{2 \sqrt 3} {\pi t^4}(3x + tu + 2tv) \exp \big( - R(x,u,v,t)\big) dt.
\end{eqnarray*}

On the one hand, we have :
\begin{eqnarray*}
 && \Big| \frac{\partial \Phi_0}{\partial v} (x,u,v) \Big| \\
 &\leq& 3x \int_{\Real_+} \frac{2 \sqrt 3} {\pi t^4} \exp\Big(-\frac{3}{2t^3} (x+tu)^2\Big)
 |u + 2v| \int_{\Real_+} \frac{2 \sqrt 3} {\pi t^3} \exp \Big(-\frac{3}{2t^3} (x+tu)^2\Big) \\
&\leq& (A + B v),
\end{eqnarray*}
where $A$ and $B$ depend only on $x$ and $u$.

On the other hand, we have:
\begin{eqnarray*}
&& \Big| \frac{\partial \Phi_0}{\partial v} (x,u,v) \Big|  \\
&\leq& \frac{6 \sqrt 3 x} \pi  \int_{\Real_+} \Inv {t^4} \exp\Big(-\frac{3v^2}{2t}\Big) +
\frac{(2u+4v) \sqrt 3} \pi  \int_{\Real_+} \Inv {t^3} \exp \Big(-\frac{3v^2}{2t}\Big)  \\
 &\leq& C v^{-7} + D (u+2v) v^{-5},
\end{eqnarray*}
where $C$ and $D$ are constants.

 Let us gather the results. The function $|f_0|$ is bounded by a
$O(\mu^{-\Inv 2})$ in the neighborhood of zero and bounded by a
$O(\mu^{-3})$ in the neighborhood of infinity, thus it is
integrable.
\end{proof}

 \vspace{9pt} \noindent {\bf Acknowledgements.} I would like to thank
sincerely my thesis advisor Jean Bertoin, who led all the directions
of my work.

\bibliographystyle{abbrv}
\bibliography{AIHP322_biblio}

\end{document}